\def\op{\operatorname}
\def\mmod{\kern-1pt\operatorname{-mod}}
\newtheorem{theorem}{Theorem}[section]
\newtheorem{lemma}[theorem]{Lemma}
\newtheorem{example}[theorem]{Example}
\theoremstyle{proposition}
\newtheorem{Prop}[theorem]{\bf Proposition}
\numberwithin{equation}{section}
\begin{document}

\title[Permutation module]{The decomposition of permutation module for infinite Chevalley groups, II }

%    Information for first author
\author{Junbin Dong}
%    Address of record for the research reported here
\address{Institute of Mathematical Sciences, ShanghaiTech University, 393 Middle Huaxia Road, Pudong, Shanghai 201210, China.}
%    Current address
\email{dongjunbin@shanghaitech.edu.cn}
%    \thanks will become a 1st page footnote.
%\thanks{}

%    General info
\subjclass[2010]{20C07, 20G05, 20F18}

\date{July 8, 2024}

\keywords{Chevalley group, permutation module, flag variety.}

\begin{abstract}
Let $\bf G$ be  a connected reductive algebraic group over  an algebraically closed field  $\Bbbk$ and  ${\bf B}$ be a Borel subgroup of ${\bf G}$.
In this paper we completely determine the composition factors of the permutation module $\mathbb{F}[{\bf G}/{\bf B}]$ for any field $\mathbb{F}$.
\end{abstract}

\maketitle

\section{Introduction}

Let $\bf G$ be  a connected reductive algebraic group over  an algebraically closed field  $\Bbbk$ and  ${\bf B}$ be an Borel subgroup of ${\bf G}$.
We will identify ${\bf G}$ with  ${\bf G}(\Bbbk)$ and  ${\bf B}$ with  ${\bf B}(\Bbbk)$.  Let  $\mathbb{F}$ be another field and  all the representations are over  $\mathbb{F}$. Now we just regard ${\bf G}/{\bf B}$ as a quotient set and consider the vector space $\mathbb{F} [{\bf G}/{\bf B}]$, which has a basis of the left cosets of ${\bf B}$ in ${\bf G}$. With left multiplication of the group ${\bf G}$,  $\mathbb{F} [{\bf G}/{\bf B}]$ is an $\mathbb{F}{\bf G}$-module, which is isomorphic to $\mathbb{F} {\bf G} \otimes_{\mathbb{F}  \bf B} \text{tr}$, where $\text{tr}$ denotes the one-dimensional  trivial ${\bf B}$-module. The permutation module  $\mathbb{F} [{\bf G}/{\bf B}]$ was studied in  \cite{CD1} and  \cite{CD2}  when $\Bbbk= \bar{\mathbb{F}}_q$, where $\bar{\mathbb{F}}_q$ is  the algebraically closure of  finite field $\mathbb{F}_q$ of  $q$ elements. In their determination of  the composition factors of  $\mathbb{F}[{\bf G}/{\bf B}]$,  the proofs  make  essential use of  the fact that $\bar{\mathbb{F}}_q$  is a union of finite fields.

The Steinberg module $\text{St}$ is the socle of  $\mathbb{F}[{\bf G}/{\bf B}]$ and  the irreducibility of  $\text{St}$ has been proved by Xi  (see \cite{Xi}) in the case  $\Bbbk= \bar{\mathbb{F}}_q$, and $\text{char} \  \mathbb{F}  =0 \ \text{or}\  \text{char} \  \bar{\mathbb{F}}_q$.  Later, Yang removed this restriction on $\text{char} \ \mathbb{F}$ and  proved the irreducibility of Steinberg module for any field $\mathbb{F}$ in \cite{Yang} (also in the case  $\Bbbk= \bar{\mathbb{F}}_q$).  Recently, A.Putman and A.Snowden showed that when $\Bbbk$ is an  infinite field (not necessary to be algebraically closed), then the Steinberg representation of ${\bf G}$ is always irreducible for any field $ \mathbb{F}$ (see \cite{PS}).
Their work inspires the idea of  the  determination of  the composition factors of  $\mathbb{F}[{\bf G}/{\bf B}]$ for general case  in this paper.  We will construct a filtration of submodules for  $\mathbb{F}[{\bf G}/{\bf B}]$ whose  subquotients are denoted by $E_J$  (indexed by the subsets of the set $I$ of simple reflections).   The main theorem is as follows:

\begin{theorem} \label{mainthm} \normalfont
Let $\mathbb{F}$ be any field.  All $\mathbb{F} {\bf G}$-modules $E_J$  are irreducible and pairwise non-isomorphic. Moreover, the $\mathbb{F} {\bf G}$-module $\mathbb{F}[{\bf G}/{\bf B}]$ has exactly $2^{|I|}$ composition factors, each occurring with  multiplicity one.
\end{theorem}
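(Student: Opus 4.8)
The plan is to reduce the theorem to two assertions about the subquotients $E_J$ constructed above: \textbf{(A)} each $E_J$ is an irreducible $\mathbb{F}\mathbf{G}$-module, and \textbf{(B)} $E_J\not\cong E_{J'}$ whenever $J\ne J'$. Given \textbf{(A)} and \textbf{(B)} the theorem follows: writing $\pi_J^K\colon\mathbb{F}[\mathbf{G}/\mathbf{P}_J]\twoheadrightarrow\mathbb{F}[\mathbf{G}/\mathbf{P}_K]$ ($J\subseteq K$) for the natural surjections and $E_J=\bigcap_{i\notin J}\ker\pi_J^{J\cup\{i\}}$, the Solomon--Tits theorem for the Tits building of $\mathbf{G}$ — and its analogue for each $\mathbf{P}_J$ — makes the complex $\;0\to E_\emptyset\to\mathbb{F}[\mathbf{G}/\mathbf{B}]\to\bigoplus_{i\in I}\mathbb{F}[\mathbf{G}/\mathbf{P}_{\{i\}}]\to\cdots\to\mathbb{F}[\mathbf{G}/\mathbf{G}]\to 0\;$ exact, and an inclusion--exclusion over the Boolean lattice refines this into a finite filtration of $\mathbb{F}[\mathbf{G}/\mathbf{B}]$ whose subquotients are exactly $\{E_J\}_{J\subseteq I}$, each once. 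By \textbf{(A)} this is a composition series of length $2^{|I|}$, and by \textbf{(B)} its factors are pairwise non-isomorphic, which is the assertion of the theorem.

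The crux is \textbf{(A)}. For $J=\emptyset$, $E_\emptyset$ is the Steinberg module $\text{St}$, whose irreducibility over an arbitrary field $\mathbb{F}$ (for $\mathbf{G}(\Bbbk)$ with $\Bbbk$ infinite) is the Putman--Snowden theorem \cite{PS}; likewise $\text{St}_{\mathbf{L}_J}$ is irreducible, $\mathbf{L}_J$ being reductive over the infinite field $\Bbbk$. For general $J$ we run the following argument on a fixed nonzero $v\in E_J$, aiming at $\mathbb{F}\mathbf{G}\,v=E_J$. Write $\mathbf{P}_J=\mathbf{L}_J\ltimes\mathbf{U}_J$ and use the Bruhat--Levi decomposition $\mathbf{G}=\coprod_{w\in W^J}\mathbf{B}w\mathbf{P}_J$ to give $\mathbf{G}/\mathbf{P}_J$ its cell structure and $E_J\subseteq\mathbb{F}[\mathbf{G}/\mathbf{P}_J]$ the corresponding support filtration. \emph{Stage 1} (the hard part): acting on $v$ with root subgroups $\mathbf{U}_\alpha$ one produces, inside $\mathbb{F}\mathbf{G}\,v$, a nonzero vector supported on the closure of a single cell. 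Here the hypothesis $|\Bbbk|=\infty$ is used decisively: the parameters $t\in\Bbbk$ of the unipotent elements are selected to make prescribed support points collide in, or separate within, suitable quotients, which is possible because at each step only finitely many values of $t$ are excluded — this general-position device is what replaces the ``$\Bbbk$ is a union of finite subfields'' argument of \cite{CD1,CD2}. The prototype is $\mathbf{G}=\SL_2$: there $E_\emptyset$ is the augmentation submodule of $\mathbb{F}[\mathbb{P}^1(\Bbbk)]$, and an induction on the size of the support, using torus and unipotent elements of $\SL_2(\Bbbk)$, carries any nonzero element to a single difference $[x]-[y]$. \emph{Stage 2}: once $v$ has been moved into a single cell, the $\mathbf{P}_J$-module structure there is governed, via $\mathbf{P}_J=\mathbf{L}_J\ltimes\mathbf{U}_J$, by the Steinberg module of $\mathbf{L}_J$; using its irreducibility one shows $\mathbb{F}\mathbf{G}\,v$ already contains a full copy of $\text{St}_{\mathbf{L}_J}$, and that this copy generates $E_J$ over $\mathbb{F}\mathbf{G}$. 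The main obstacle, as indicated, is Stage 1 — tracking the coefficients of $v$ across the infinitely many points of the open cell precisely enough to isolate one cell while keeping the output nonzero — and this bookkeeping, carried out over an arbitrary infinite field, is the technical heart of the argument.

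For \textbf{(B)} it suffices to extract from $E_J$ an isomorphism invariant that recovers $J$: one shows that $E_J$ occurs as a composition factor of $\mathbb{F}[\mathbf{G}/\mathbf{P}_K]$ exactly when $K\subseteq J$. The ``if'' is built into the construction (and the filtration above), while the ``only if'' is obtained by running the Stage-1/Stage-2 analysis with $\mathbf{P}_K,\mathbf{L}_K$ in place of $\mathbf{P}_J,\mathbf{L}_J$: the restriction of $E_J$ to $\mathbf{P}_K$ cannot produce the relevant Levi-Steinberg unless $\mathbf{L}_K\subseteq\mathbf{L}_J$, i.e. $K\subseteq J$. (One proves this directly, rather than reading it off an already-assumed composition series, to avoid circularity.) Since the set $\{K:K\subseteq J\}$ determines $J$, the $E_J$ are pairwise non-isomorphic, and the proof is complete.
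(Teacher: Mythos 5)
Your sketch is consistent in spirit with the paper's strategy in the ``cross-characteristic'' regime, but it has a genuine gap exactly where the paper does its main new work: you never separate the case $\operatorname{char}\Bbbk=\operatorname{char}\mathbb{F}=p>0$, and the general-position device you lean on in Stage~1 (``only finitely many values of $t$ are excluded'') cannot carry that case. The problem is structural, not merely technical: when $\operatorname{char}\Bbbk=\operatorname{char}\mathbb{F}=p$, the collapsing moves you need inevitably produce sums over finite $p$-subgroups of $\Bbbk$ sitting inside root subgroups, and such sums have coefficient $0$ in $\mathbb{F}$ regardless of how the unipotent parameters are chosen; no choice of $t$ avoids this. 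The paper handles this by a different mechanism entirely. It introduces \emph{self-enclosed} subgroups of $\mathbf{U}$ (Section~3, Lemma~\ref{SCgroup}), then in Lemma~\ref{oneterm} averages a nonzero element over a suitable finite self-enclosed $p$-subgroup $V$ and uses the nonvanishing of $p$-group invariants in characteristic $p$ (Serre) to land on a vector of the form $\sum_w a_w\,\underline{V_{w_Jw^{-1}}}\dot w C_J$, and then reduces the support one $w$ at a time. This is the technical heart of the equal-characteristic case and is invisible in your sketch.

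Even in the complementary case ($\operatorname{char}\Bbbk=0$ or $\operatorname{char}\Bbbk\ne\operatorname{char}\mathbb{F}$), your Stage~1 does not match what is actually needed. The paper does not isolate the support to a single cell; it proves a non-vanishing property for the augmentation map $\epsilon$ (Lemma~\ref{firstlemma} and Proposition~\ref{Key}) — i.e.\ it finds $g$ with $\epsilon(g\xi)\ne0$ while the support may remain spread over many $w\in Y_J$ — and then invokes the Putman–Snowden machinery (their Propositions~5.4 and 6.7) to collapse the $\mathbf{U}$-part and obtain $\sum_w a_w\,\dot w C_J\in M$, finishing by the argument of Claim~2 of \cite{CD1}. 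Your general-position idea does appear in the paper (inside the proof of Lemma~\ref{firstlemma} one chooses $y\in\mathbf{U}_s$ generically, using that $\mathbf{U}$ is infinite), but it is one ingredient of the augmentation argument, not a substitute for it. Your Stage~2 (Steinberg of the Levi) is morally close to the paper's final step using \cite[Prop.~4.1]{PS}, so that part is fine as a heuristic.

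Two smaller points. First, your realization $E_J=\bigcap_{i\notin J}\ker\bigl(\mathbb{F}[\mathbf{G}/\mathbf{P}_J]\twoheadrightarrow\mathbb{F}[\mathbf{G}/\mathbf{P}_{J\cup\{i\}}]\bigr)$ is not the paper's definition; the paper takes $E_J=\mathbb{M}(\op{tr})_J/\mathbb{M}(\op{tr})_J'$ as a subquotient of $\mathbb{F}[\mathbf{G}/\mathbf{B}]$ with an explicit basis (Proposition~\ref{DesEJ}), and that basis is what every later computation uses. Identifying the two descriptions is plausible but is an extra claim you would have to prove, particularly in positive characteristic where $\mathbb{F}[\mathbf{G}/\mathbf{B}]$ is far from semisimple. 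Second, for pairwise non-isomorphism the paper simply cites \cite[Prop.~2.7]{Xi} (recorded as Lemma~\ref{EJ}); your proposed ``detect $K\subseteq J$ via occurrence in $\mathbb{F}[\mathbf{G}/\mathbf{P}_K]$'' is a reasonable alternative route but would need to be worked out, and risks circularity with the very composition-series statement being proved unless done carefully.
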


It is well known that the flag variety ${\bf G}/{\bf B}$ plays a very important role in the representation theory.  So the decomposition of  $\mathbb{F}[{\bf G}/{\bf B}]$ may have many applications in other areas such as algebraic geometry and number theory.

This paper is organized as follows: Section 2 contains some notations and preliminary results. In particular, we study the properties of  the subquotient modules $E_J$ of $\mathbb{F} [{\bf G}/{\bf B}]$. In Section 3, we list some properties of the unipotent radical ${\bf U}$ of ${\bf B}$ and  study the self-enclosed subgroup of ${\bf U}$,  which is useful in the later discussion. Section 4  gives the non-vanishing property of the augmentation.  In the last section, we will prove that  all the $\mathbb{F} {\bf G}$-modules $E_J$  are irreducible for any fields $\Bbbk$ and  $\mathbb{F}$.

\section{Preliminaries}

As in the introduction,  $\bf G$ is  a connected reductive algebraic group over  an algebraically closed field  $\Bbbk$ and  ${\bf B}$ is a Borel subgroup. Let  ${\bf T}$ be a  maximal torus contained in ${\bf B}$, and ${\bf U}=R_u({\bf B})$ be the  unipotent radical of ${\bf B}$. We identify ${\bf G}$ with ${\bf G}(\Bbbk)$ and do likewise for various subgroups of ${\bf G}$ such as ${\bf B}, {\bf T}, {\bf U}$ $\cdots$. We denote by $\Phi=\Phi({\bf G};{\bf T})$ the corresponding root system, and by $\Phi^+$ (resp. $\Phi^-$) the set of positive (resp. negative) roots determined by ${\bf B}$. Let $W=N_{\bf G}({\bf T})/{\bf T}$ be the corresponding Weyl group. We denote by $\Delta=\{\alpha_i\mid i\in I\}$ the set of simple roots and by $S=\{s_i:=s_{\alpha_i}\mid i\in I\}$ the corresponding simple reflections in $W$. For each $\alpha\in\Phi$, let ${\bf U}_\alpha$ be the root subgroup corresponding to $\alpha$ and we fix an isomorphism $\varepsilon_\alpha:  \Bbbk \rightarrow{\bf U}_\alpha$ such that $t\varepsilon_\alpha(c)t^{-1}=\varepsilon_\alpha(\alpha(t)c)$ for any $t\in{\bf T}$ and $c\in \Bbbk $. For any $w\in W$, let ${\bf U}_w$ (resp. ${\bf U}_w'$) be the subgroup of ${\bf U}$ generated by all ${\bf U}_\alpha$  with $w(\alpha)\in\Phi^-$ (resp. $w(\alpha)\in\Phi^+$).
 For any $J\subset I$, let $W_J$ be the corresponding standard parabolic subgroup of $W$ and $w_J$ be the longest element in $W_J$.
For a subgroup $H$ of $ {\bf G}$ and  $g\in {\bf G}$, let $H^{g}= g^{-1}H g$.

The permutation module  $\mathbb{F}[{\bf G}/{\bf B}]$ is  isomorphic to  the induced module  $\mathbb{M}(\op{tr})= \mathbb{F} {\bf G} \otimes_{\mathbb{F}  \bf B} \text{tr}$. Now let ${\bf 1}_{tr}$ be a nonzero element of $\op{tr}$.   For convenience, we abbreviate $x\otimes{\bf 1}_{\op{tr}}\in\mathbb{M}(\op{tr})$ to $x{\bf 1}_{\op{tr}}$.  Each element $\varphi\in\op{End}_{\mathbb{F}{\bf G}}(\mathbb{M}(\op{tr}))$  is determined by  $\varphi({\bf 1}_{tr})$. Note that $\varphi({\bf 1}_{tr})$ is a ${\bf B}$-stable vector. Thus we have $\varphi({\bf 1}_{tr})= \lambda {\bf 1}_{tr}$ for some $\lambda \in \mathbb{F}$, which implies that  $\op{End}_{\mathbb{F}{\bf G}}(\mathbb{M}(\op{tr}))\cong \mathbb{F} $.  In particular, the $\mathbb{F} {\bf G}$-module $\mathbb{M}(\op{tr})$ is indecomposable.

For any $w\in W$, let  $\dot{w}$ be a representative of $w$. For any $t\in {\bf T}$ and $n\in N_{\bf G}({\bf T})$, we have $nt  {\bf 1}_{\op{tr}}= n {\bf 1}_{\op{tr}}$. Thus $w {\bf 1}_{\op{tr}}=\dot{w}   {\bf 1}_{\op{tr}}$ is well-defined. For any $J\subset I$, we set
$$\eta_J=\sum_{w\in W_J}(-1)^{\ell(w)}w {\bf 1}_{\op{tr}},$$
where $\ell(w)$ is the length of $w$. Let $\mathbb{M}(\op{tr})_J=\mathbb{F}{\bf G}\eta_J$. It was proved in \cite[Proposition 2.3]{Xi} that
$$\mathbb{M}(\op{tr})_J=\mathbb{F}{\bf U}W\eta_J.$$
For $w\in W$,  set  $\mathscr{R}(w)=\{i\in I\mid ws_i<w\}$.  For any subset $J\subset I$, we let
$$ X_J =\{x\in W\mid x~\text{has~minimal~length~in}~xW_J\}.$$

\begin{Prop}\label{MJ=KUW}
 For any $J\subset I$, the $\mathbb{F} {\bf G}$-module $\mathbb{M}(\op{tr})_J$ has the form $$\mathbb{M}(\op{tr})_J=\sum_{w\in X_J}\mathbb{F}{\bf U}w\eta_J=\sum_{w\in X_J}\mathbb{F}{\bf U}_{w_Jw^{-1}}w\eta_J,$$
 and the set $\{uw\eta_J \mid w\in X_J, u\in {\bf U}_{w_Jw^{-1}} \}$ forms a basis of $\mathbb{M}(\op{tr})_J$.
\end{Prop}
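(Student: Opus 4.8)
The plan is to whittle down, in two stages, the spanning set $\mathbb{F}{\bf U}W\eta_J$ of $\mathbb{M}(\op{tr})_J$ provided by \cite{Xi} — first replacing $W$ by $X_J$, then replacing ${\bf U}$ by ${\bf U}_{w_Jw^{-1}}$ — and afterwards to prove that the resulting set is linearly independent by reading off coefficients in the Bruhat decomposition of ${\bf G}/{\bf B}$. Here $w_J$ denotes the longest element of $W_J$ (so $w_J^{-1}=w_J$), ${\bf P}_J\supseteq{\bf B}$ the standard parabolic with Levi ${\bf L}_J$, ${\bf U}_J^+={\bf U}\cap{\bf L}_J$, ${\bf U}_{P_J}=R_u({\bf P}_J)$, and $\Phi_J^+=\Phi^+\cap\sum_{j\in J}\mathbb{Z}\alpha_j$; thus ${\bf U}={\bf U}_J^+{\bf U}_{P_J}$, and since ${\bf U}_{P_J}\subseteq{\bf B}$ is normalized by every $\dot v$ with $v\in W_J$, the group ${\bf U}_{P_J}$ fixes $\eta_J$.

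For the first reduction, write each $y\in W$ uniquely as $y=wv$ with $w\in X_J$, $v\in W_J$, $\ell(y)=\ell(w)+\ell(v)$. As ${\bf T}$ acts trivially on $\op{tr}$ one has $\dot v\dot z\,{\bf 1}_{\op{tr}}=(vz)\,{\bf 1}_{\op{tr}}$ for $z\in W_J$, and since $x\mapsto(-1)^{\ell(x)}$ is a homomorphism, reindexing the sum $v\eta_J=\sum_{z\in W_J}(-1)^{\ell(z)}(vz){\bf 1}_{\op{tr}}$ gives $v\eta_J=(-1)^{\ell(v)}\eta_J$; hence $y\eta_J=(-1)^{\ell(v)}w\eta_J$ and $\mathbb{M}(\op{tr})_J=\sum_{w\in X_J}\mathbb{F}{\bf U}w\eta_J$.

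For the second reduction, fix $w\in X_J$ and let $u\in{\bf U}$. Factor $u=u_1u_2$ via the standard (uniquely factored) decomposition ${\bf U}={\bf U}_{w^{-1}}{\bf U}_{w^{-1}}'$; then $\dot w^{-1}u_2\dot w\in{\bf U}$, and writing $\dot w^{-1}u_2\dot w=ab$ with $a\in{\bf U}_J^+$, $b\in{\bf U}_{P_J}$ we get
\[
 uw\eta_J=u_1\dot w(\dot w^{-1}u_2\dot w)\eta_J=u_1\dot w\,a\,\eta_J=\bigl(u_1\cdot\dot w a\dot w^{-1}\bigr)\,w\eta_J .
\]
Now ${\bf U}_{w^{-1}}$ and $\dot w{\bf U}_J^+\dot w^{-1}$ are both subgroups of ${\bf U}_{w_Jw^{-1}}$: using $w\in X_J$ (so $w(\Phi_J^+)\subseteq\Phi^+$ and $w(\Phi_J^-)\subseteq\Phi^-$), their root sets $\{\alpha\in\Phi^+\mid w^{-1}(\alpha)\in\Phi^-\}$ and $w(\Phi_J^+)$ are closed and contained in $\{\alpha\in\Phi^+\mid w_Jw^{-1}(\alpha)\in\Phi^-\}$; in fact these two disjoint sets exhaust it, of total size $\ell(w)+\ell(w_J)=\ell(w_Jw^{-1})$. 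Hence $u_1\cdot\dot w a\dot w^{-1}\in{\bf U}_{w_Jw^{-1}}$, so ${\bf U}w\eta_J\subseteq{\bf U}_{w_Jw^{-1}}w\eta_J$ and therefore $\mathbb{M}(\op{tr})_J=\sum_{w\in X_J}\mathbb{F}{\bf U}_{w_Jw^{-1}}w\eta_J$.

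Finally, for linear independence, expand $uw\eta_J=\sum_{v\in W_J}(-1)^{\ell(v)}[u\dot w\dot v{\bf B}]$ in the basis $\{[g{\bf B}]\}$ of $\mathbb{F}[{\bf G}/{\bf B}]$; each $[u\dot w\dot v{\bf B}]$ lies in the Bruhat cell of $wv$. Fix $w_0\in X_J$. The cell of $w_0w_J$ receives a contribution from $[u\dot w\dot v{\bf B}]$ only when $wv=w_0w_J$, which — since $w,w_0\in X_J$ then represent the same coset $w_0W_J$ — forces $w=w_0$, $v=w_J$. Because $(w_0w_J)^{-1}=w_Jw_0^{-1}$, that cell is parametrized by ${\bf U}_{(w_0w_J)^{-1}}={\bf U}_{w_Jw_0^{-1}}$, i.e.\ $u\mapsto[u\dot{w_0}\dot{w_J}{\bf B}]$ is injective precisely on the group over which $u$ ranges; hence in any relation $\sum_{w\in X_J}\sum_u c_{w,u}\,uw\eta_J=0$ the coefficient of $[u\dot{w_0}\dot{w_J}{\bf B}]$ equals $(-1)^{\ell(w_J)}c_{w_0,u}$, so all $c_{w_0,u}$ vanish, and letting $w_0$ range kills every coefficient. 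I expect the main obstacle to be the second reduction — the inversion-set bookkeeping guaranteeing that the part of ${\bf U}$ absorbed by $\eta_J$ reappears inside ${\bf U}_{w_Jw^{-1}}$; the linear-independence step is then essentially formal, hinging on the identity $(w_0w_J)^{-1}=w_Jw_0^{-1}$, which is what makes the largest cell in the block of $w_0$ the right place to recover the unipotent factor.
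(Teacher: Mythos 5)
Your proof is correct. The paper itself does not reprove this statement — it cites it as \cite[Lemma~2.2]{CD1} — so there is no in-text argument to compare against, but the two-stage reduction you give (absorb $W_J$ into $\eta_J$ via the sign character, then absorb the ${\bf U}'_{w^{-1}}$-and-Levi part of ${\bf U}$ into ${\bf U}_{w_Jw^{-1}}$ using ${\bf U}={\bf U}_{w^{-1}}{\bf U}'_{w^{-1}}$ together with the Levi factorization of $\dot w^{-1}u_2\dot w$ and the fact that $R_u({\bf P}_J)$ fixes $\eta_J$) is exactly the standard route, and the linear-independence step via the top Bruhat cell ${\bf B}w_0w_J{\bf B}/{\bf B}$ of each block $w_0W_J$, using $(w_0w_J)^{-1}=w_Jw_0^{-1}$ and the unique $X_J\times W_J$ factorization, is the natural way to read off coefficients. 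Two small remarks on presentation: the inversion-set bookkeeping in the second reduction is the heart of the matter and is complete as written, but it would be cleaner to state explicitly that for $w\in X_J$ one has $w(\Phi_J^+)\subseteq\Phi^+$ and that $w_J$ permutes $\Phi^+\setminus\Phi_J^+$ and sends $\Phi_J^+$ to $\Phi_J^-$, since both facts are used; and the closedness of the two root subsets, while true, is not actually needed — ${\bf U}_{w^{-1}}$ and $\dot w{\bf U}_J^+\dot w^{-1}$ are already subgroups by general principles, and all you need is that their product lies in the group ${\bf U}_{w_Jw^{-1}}$.
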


\begin{proof}  Firstly, it is easy to see that $\mathbb{M}(\op{tr})_J=\mathbb{F}{\bf U}W\eta_J= \mathbb{F}{\bf U}X_J\eta_J$ since $y\eta_J= (-1)^{\ell(y)} \eta_J$ for any $y\in W_J$.  Let $w\in X_J$.  For any $\gamma \in  \Phi^+$ such that $w_Jw^{-1}(\gamma) \in  \Phi^+ $,  we have  $x^{-1}w^{-1}(\gamma) \in  \Phi^+$ for any $x\in W_J$. For  $u\in {\bf U}_{\gamma}$ and $x\in W_J$, we get
$$ uwx {\bf 1}_{\op{tr}}= wx (x^{-1}w^{-1} u w x ) {\bf 1}_{\op{tr}}= wx  {\bf 1}_{\op{tr}}$$
since $x^{-1}w^{-1} u w x\in {\bf U}$.  In particular, we get ${\bf U}w\eta_J= {\bf U}_{w_Jw^{-1}}w\eta_J$. Then we obtain the first part.

 In the following we show that $\{uw\eta_J \mid w\in X_J, u\in {\bf U}_{w_Jw^{-1}} \}$ forms a basis of $\mathbb{M}(\op{tr})_J$. It is enough to prove that this set is linearly independent. Suppose this set is  linearly dependent, then there exist
  $f_{u,w}\in \mathbb{F}$ (not all zero)   such that
  \begin{equation}\label{eq1}
    \sum_{w\in X_J} \sum_{u\in {\bf U}_{w_Jw^{-1}} } f_{u, w} uw\eta_J=0.
  \end{equation}
 Let $z\in X_J$  whose  length is maximal such that   $f_{u_0,z}\ne 0$ for some $u_0\in {\bf U}_{{w_J}z^{-1}}$.
 Substitute $\eta_J=\displaystyle \sum_{x\in W_J}(-1)^{\ell(x)}x {\bf 1}_{\op{tr}} $ in the equation (\ref{eq1}).   According to  the  Bruhat decomposition, the set $\{uw{\bf 1}_{\op{tr}} \mid w\in W, u\in {\bf U}_{w^{-1}} \}$ is linearly independent
in $\mathbb{M}(\op{tr})$.  Then we have
$$\sum_{u\in {\bf U}_{{w_J}z^{-1}} } f_{u, z} uzw_J{\bf 1}_{\op{tr}}=0.$$
So we get  $f_{u,z}=0$ for all $u\in {\bf U}_{{w_J}z^{-1}}$, which is a contradiction. The proposition is proved.
\end{proof}

For any $i\in I$, set $ {\bf U}_{\alpha_i}^*= {\bf U}_{\alpha_i}\backslash\{id\}$, where $id$ is the neutral element of $ {\bf U}$.  For the convenience of later discussion, we give some details about the expression of the element $\dot{s_i} u_i w \eta_J $, where
 $u_i\in {\bf U}_{\alpha_i}^*$ and $w\in X_J$. For each $u_i\in {\bf U}_{\alpha_i}^*$, we have
$$\dot{s_i}u_i\dot{s_i}=f_i(u_i)\dot{s_i}h_i(u_i)g_i(u_i),$$
where $f_i(u_i),g_i(u_i) \in {\bf U}_{\alpha_i}^*$, and $h_i(u_i)\in {\bf T}$ are uniquely determined. Moreover if we regard $f_i$ as a morphism on ${\bf U}_{\alpha_i}^*$, then  $f_i$ is a bijection. The following lemma is very useful in the later discussion. Its proof can be found in the proof of  \cite[Proposition 2.3]{Xi} and we omit it.

\begin{lemma} \label{suhlemma}
Let $u_i\in {\bf U}_{\alpha_i}^* $, with the notation above, then we have

\noindent $(a)$ If $ww_J\leq s_iww_J$, then $\dot{s_i} u_i w\eta_J =s_iw \eta_J $.

\noindent $(b)$  If $s_i w \leq  w$, then $\dot{s_i} u_i w \eta_J  = f_i(u_i)w\eta_J$.

\noindent  $(c)$  If $w \leq  s_iw$ but $s_iww_J\leq ww_J$, then $\dot{s_i} u_i w \eta_J =(f_i(u_i)-1)w\eta_J $.
\end{lemma}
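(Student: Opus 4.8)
The plan is to deduce all three formulas from the single conjugation identity $\dot{s_i}u_i\dot{s_i}^{-1}=f_i(u_i)\dot{s_i}h_i(u_i)g_i(u_i)$, using throughout that ${\bf T}$ and every root subgroup ${\bf U}_{\alpha_i}$ are contained in ${\bf B}$ and hence fix ${\bf 1}_{\op{tr}}$, that $\dot{v}^{-1}{\bf U}_\gamma\dot{v}={\bf U}_{v^{-1}(\gamma)}$ for $v\in W$, and that a torus factor that winds up immediately to the left of $\eta_J$ (possibly after conjugation through some $\dot{v}$) is absorbed because ${\bf T}\subseteq{\bf B}$. I would first record two auxiliary facts; here $\Phi_J$ denotes the root subsystem of $\Phi$ generated by $\{\alpha_j\mid j\in J\}$, $\Phi_J^\pm=\Phi_J\cap\Phi^\pm$, and one uses that $W_J$ stabilises $\Phi^+\setminus\Phi_J^+$. \emph{Fact A}: if $\gamma\in\Phi^+\setminus\Phi_J^+$ then ${\bf U}_\gamma$ fixes $\eta_J$, because for each $v\in W_J$ and $u\in{\bf U}_\gamma$ we have $\dot{v}^{-1}u\dot{v}\in{\bf U}_{v^{-1}(\gamma)}\subseteq{\bf U}\subseteq{\bf B}$, so $u$ fixes each $\dot{v}{\bf 1}_{\op{tr}}$. \emph{Fact B}: for $j\in J$ and $u\in{\bf U}_{\alpha_j}\setminus\{\op{id}\}$ one has $\dot{s_j}u\eta_J=(f_j(u)-1)\eta_J$ (in particular $\dot{s_j}\eta_J=-\eta_J$); to prove it, split $W_J=\{v\mid s_jv>v\}\sqcup\{v\mid s_jv<v\}$, the second part being the image of the first under $v\mapsto s_jv$; on the first part $\dot{s_j}u\dot{v}{\bf 1}_{\op{tr}}=\dot{s_jv}{\bf 1}_{\op{tr}}$ since $v^{-1}(\alpha_j)>0$, and on the second part, writing $v=s_jv_0$ with $s_jv_0>v_0$ and applying the conjugation identity, the factors $h_j(u),g_j(u),\dot{s_j}^2$ are absorbed (conjugating by $\dot{v_0}^{-1}$ lands them in ${\bf B}$, using $v_0^{-1}(\alpha_j)>0$), giving $\dot{s_j}u\dot{v}{\bf 1}_{\op{tr}}=f_j(u)\dot{v}{\bf 1}_{\op{tr}}$; summing and reindexing the first sum yields $\dot{s_j}u\eta_J=(f_j(u)-1)\sum_{v:s_jv<v}(-1)^{\ell(v)}\dot{v}{\bf 1}_{\op{tr}}$, and $(f_j(u)-1)\sum_{v:s_jv>v}(-1)^{\ell(v)}\dot{v}{\bf 1}_{\op{tr}}=0$ because $f_j(u)$ fixes each $\dot{v}{\bf 1}_{\op{tr}}$ there.

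Next I would translate the hypotheses, using that $\ell(ww_J)=\ell(w)+\ell(w_J)$ is equivalent to $w\in X_J$, for which $w(\Phi_J^+)\subseteq\Phi^+$ and $\ell(wx)=\ell(w)+\ell(x)$ for all $x\in W_J$, together with $w_J(\Phi^+)=(\Phi^+\setminus\Phi_J^+)\cup\Phi_J^-$. Computing $(ww_J)^{-1}(\alpha_i)=w_Jw^{-1}(\alpha_i)$ shows: case (i) means $w^{-1}(\alpha_i)\in\Phi^+\setminus\Phi_J^+$ (so $s_iw>w$ and $s_iw\in X_J$); case (ii) means $w^{-1}(\alpha_i)\in\Phi^-$, whence $w$ has a reduced word beginning with $s_i$; case (iii) means $w^{-1}(\alpha_i)\in\Phi_J^+$, which with $\ell(wx)=\ell(w)+\ell(x)$ forces $w^{-1}(\alpha_i)=\alpha_j$ and $s_iw=ws_j$ for some $j\in J$. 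Case (i) is then immediate from Fact A: $\dot{w}^{-1}u_i\dot{w}\in{\bf U}_{w^{-1}(\alpha_i)}$ fixes $\eta_J$, so $u_iw\eta_J=w\eta_J$ and $\dot{s_i}u_iw\eta_J=\dot{s_i}w\eta_J=s_iw\eta_J$. For case (ii), write $w=s_iw'$ with $\ell(w)=\ell(w')+1$, so $\dot{s_i}u_iw\eta_J=\dot{s_i}u_i\dot{s_i}\dot{w'}\eta_J$; substituting $\dot{s_i}u_i\dot{s_i}=f_i(u_i)\dot{s_i}h_i(u_i)g_i(u_i)\dot{s_i}^2$ and using $\dot{s_i}^2\dot{w'}\eta_J=w'\eta_J$, then noting that $g_i(u_i)$ and $h_i(u_i)$, conjugated by $\dot{w'}$, lie in ${\bf U}_{-w^{-1}(\alpha_i)}$ (a root subgroup of a root in $\Phi^+\setminus\Phi_J^+$, again since $w\in X_J$) and in ${\bf T}$, hence fix $w'\eta_J$ by Fact A, and that $\dot{s_i}(w'\eta_J)=w\eta_J$, one is left with $\dot{s_i}u_iw\eta_J=f_i(u_i)w\eta_J$.

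For case (iii), choose representatives so that $\dot{s_i}\dot{w}=\dot{w}\dot{s_j}t_0$ with $t_0\in{\bf T}$. Since $w^{-1}(\alpha_i)=\alpha_j$, conjugating $u_i$ past $\dot{w}$ and absorbing $t_0$ rewrites $\dot{s_i}u_iw\eta_J=\dot{w}\dot{s_j}\tilde{u}\,\eta_J$ for some $\tilde{u}\in{\bf U}_{\alpha_j}\setminus\{\op{id}\}$, which by Fact B equals $\dot{w}(f_j(\tilde{u})-1)\eta_J=(\dot{w}f_j(\tilde{u})\dot{w}^{-1})\,w\eta_J-w\eta_J$. The remaining task is to recognise $\dot{w}f_j(\tilde{u})\dot{w}^{-1}$ as $f_i(u_i)$ itself: conjugating the defining relation $\dot{s_j}\tilde{u}\dot{s_j}^{-1}=f_j(\tilde{u})\dot{s_j}h_j(\tilde{u})g_j(\tilde{u})$ by $\dot{w}$, using $\dot{w}\dot{s_j}\dot{w}^{-1}\in\dot{s_i}{\bf T}$, that $\dot{w}\tilde{u}\dot{w}^{-1}$ is a torus conjugate of $u_i$, and $\dot{w}{\bf U}_{\alpha_j}\dot{w}^{-1}={\bf U}_{\alpha_i}$, and then collecting the torus factors into the middle slot, one obtains a presentation of $\dot{s_i}u_i\dot{s_i}^{-1}$ of the shape $f\,\dot{s_i}\,h\,g$ with $f=\dot{w}f_j(\tilde{u})\dot{w}^{-1}$, $g=\dot{w}g_j(\tilde{u})\dot{w}^{-1}\in{\bf U}_{\alpha_i}\setminus\{\op{id}\}$, $h\in{\bf T}$; the uniqueness clause in the definition of $f_i,g_i,h_i$ then forces $f=f_i(u_i)$, giving $\dot{s_i}u_iw\eta_J=(f_i(u_i)-1)w\eta_J$. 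I expect this last identification --- steering the several torus factors (from $\dot{s_i}^2$, from $\dot{w}t_0\dot{w}^{-1}$, and from conjugating $u_i$) into exactly the torus slot so that uniqueness can be invoked --- to be the main obstacle; a smaller point of care is tracking, in Fact B, which half of $W_J$ carries the absorbed factors.
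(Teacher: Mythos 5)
Your argument is correct, and all three cases check out: Fact A and Fact B are valid (the reindexing $v\mapsto s_jv$ together with the vanishing of $(f_j(u)-1)$ on the terms with $s_jv>v$ gives exactly $(f_j(u)-1)\eta_J$), the translation of the hypotheses into $w^{-1}(\alpha_i)\in\Phi^+\setminus\Phi_J^+$, $\Phi^-$, or $\Phi_J^+$ is right (using that $w\in X_J$ excludes $\Phi_J^-$, and the Deodhar-type fact that $w^{-1}(\alpha_i)\in\Phi_J^+$ forces $w^{-1}(\alpha_i)=\alpha_j$ with $s_iw=ws_j$), and in case (iii) the conjugation of the defining relation for $\tilde{u}$ by $\dot{w}$ does produce $\dot{s_i}u_i\dot{s_i}^{-1}=f'\dot{s_i}(t_1h')g'$, so the uniqueness clause legitimately identifies $f'=\dot{w}f_j(\tilde{u})\dot{w}^{-1}$ with $f_i(u_i)$; the torus factors cancel exactly as you anticipate. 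Note that the paper itself gives no proof of this lemma — it defers to Proposition 2.3 of Xi's paper — and your computation is essentially the standard argument that citation points to (reduce to the rank-one identity inside the $W_J$-sum, absorb everything in ${\bf B}$ into ${\bf 1}_{\op{tr}}$, and use uniqueness of the $f\,\dot{s_i}\,h\,g$ factorization), so there is no genuine divergence of method; the only places worth writing out in full are the two standard facts you invoke without proof (that $W_J$ stabilises $\Phi^+\setminus\Phi_J^+$, and the case-(iii) reduction to a simple root of $\Phi_J$), both of which are true and routine.
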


Following \cite[2.6]{Xi}, we define
$$E_J=\mathbb{M}(\op{tr})_J/\mathbb{M}(\op{tr})_J',$$
where $\mathbb{M}(\op{tr})_J'$ is the sum of all $\mathbb{M}(\op{tr})_K$ with $J\subsetneq K$.  We denote by $C_J$ the image of $\eta_J$ in $E_J$.
For each $w\in W$, let
$$h_w=\sum_{y\leq w}(-1)^{\ell(w)-\ell(y)}P_{y,w}(1)y\in \mathbb{F} W,$$
where $P_{y,w}$ are Kazhdan-Lusztig polynomials (see \cite[Theorem1.1]{KL}). The set $\{h_w\mid w\in W\}$ is  a basis of $ \mathbb{F} W$.
We set
$$Y_J =\{w\in X_J \mid \mathscr{R}(ww_J)=J\}.$$

\begin{lemma}\label{wCJ}
Let $J\subset I$. Then each one of the following sets is a basis of $\mathbb{F} Wh_{w_J}$:

\noindent $(a)$ $\{wh_{w_J}\mid w\in X_J\}$;

\noindent $(b)$ $\{h_{ww_J}\mid w\in X_J\}$;

\noindent $(c)$ $\{yh_{w_J}\mid y\in Y_J\}\cup\{h_{xw_J}\mid x\in X_J\backslash Y_J\}$.
\end{lemma}

\begin{proof}
(a) By \cite[Lemma 2.6 (vi)]{KL}, we see that $$h_{w_J}=(-1)^{\ell(w_J)} \sum_{y\in W_J} (-1)^{\ell(y)}y \in \mathbb{F} W.$$
It is clear that $wh_{w_J}=(-1)^{\ell(w)}h_{w_J} $ for any $w\in W_J$. So we have $\mathbb{F} Wh_{w_J}=\mathbb{F} X_Jh_{w_J}$.
Now suppose that there exist $a_w\in \mathbb{F}$ (not all zero) such that
$$\sum_{w\in X_J} a_w wh_{w_J}=0.$$
Let $z\in X_J$ whose length is maximal such that $a_z\ne 0$. Substitute $h_{w_J}$ and we get
$a_zww_J=0$ in $\mathbb{F} W$. So $a_z=0$, which is a contraction. Therefore  $\{wh_{w_J}\mid w\in X_J\}$  is a basis of $\mathbb{F} Wh_{w_J}$.

(b) By \cite[Lemma 2.8 (c)]{G}, for $x\in X_J$, we have
  \begin{equation}\label{eq2}
  h_{xw_J}=xh_{w_J}+ \sum_{ w\in X_J, w<x}b_wwh_{w_J}, \quad  b_w\in \mathbb{F}.
  \end{equation}
Using induction on $\ell(x)$ we see that
  \begin{equation}\label{eq3}
  xh_{w_J}=h_{xw_J} + \sum_{ w\in X_J, w<x}b'_wh_{ww_J}, \quad  b'_w\in \mathbb{F}.
    \end{equation}
Thus (b) is proved by (a).

(c) We claim  that for any $w\in X_J$,  $wh_{w_J}$ is a linear combination of the elements in $\{yh_{w_J}\mid y\in Y_J\}\cup\{h_{xw_J}\mid x\in X_J\backslash Y_J\}$. If $\ell(w)=0,$ then the claim is obvious. Now assume that the claim is true for $z\in X_J$ with $\ell(z)<\ell(w)$.
If $w\in Y_J$, then the claim is clear. If $w\in X_J\backslash Y_J$, using formula (\ref{eq3})  and induction hypothesis we see that the claim is true. Now (c) is proved.

\end{proof}

\begin{Prop}\label{DesEJ}
For $J\subset I$, we have $$E_J=\sum_{w\in Y_J} \mathbb{F} {\bf U}_{w_Jw^{-1}}wC_J,$$
and the set $\{uwC_J \mid w\in Y_J, u\in {\bf U}_{w_Jw^{-1}} \}$ forms a basis of $E_J$.
\end{Prop}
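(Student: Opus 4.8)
By Proposition~\ref{MJ=KUW}, $\mathcal{B}:=\{uw\eta_J\mid w\in X_J,\ u\in{\bf U}_{w_Jw^{-1}}\}$ is a basis of $\mathbb{M}(\op{tr})_J$; since $\eta_K\in\mathbb{F}{\bf G}\eta_J$ for every $K\supseteq J$, the subspace $\mathbb{M}(\op{tr})_J'=\sum_{k\in I\setminus J}\mathbb{F}{\bf G}\eta_{J\cup\{k\}}$ is contained in $\mathbb{M}(\op{tr})_J$. Let $\pi\colon\mathbb{M}(\op{tr})_J\to\mathbb{M}(\op{tr})_J$ be the $\mathcal{B}$--coordinate projection that fixes every $uw\eta_J$ with $w\in X_J\setminus Y_J$ and kills every $uw\eta_J$ with $w\in Y_J$. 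As $uwC_J$ is the image of $uw\eta_J$ in $E_J=\mathbb{M}(\op{tr})_J/\mathbb{M}(\op{tr})_J'$, the proposition is exactly the assertion that $\pi$ restricts to a bijection $\mathbb{M}(\op{tr})_J'\xrightarrow{\ \sim\ }\bigoplus_{w\in X_J\setminus Y_J}\mathbb{F}{\bf U}_{w_Jw^{-1}}w\eta_J$: surjectivity says the $uwC_J$ with $w\in Y_J$ span $E_J$, and injectivity says they are linearly independent.

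The basic tool is a leading--term analysis along the Bruhat filtration. For $\xi\in\mathbb{F}[{\bf G}/{\bf B}]$ let $\op{supp}(\xi)\subseteq W$ record the Bruhat cells $\mathcal{C}_w={\bf U}\dot{w}{\bf B}/{\bf B}$ meeting $\xi$; left multiplication by ${\bf U}$ preserves every cell. One computes $\op{supp}(uw\eta_J)=wW_J$, the component of $uw\eta_J$ in the top cell $\mathcal{C}_{ww_J}$ being $\pm\,u\,\dot{(ww_J)}{\bf B}$; since $w\in X_J$ the map $u\mapsto u\,\dot{(ww_J)}{\bf B}$ is injective on ${\bf U}_{w_Jw^{-1}}$, so this top cell detects the coefficient of $uw\eta_J$ in any element of $\mathbb{M}(\op{tr})_J$, and we grade $\mathcal{B}$ by $\deg(uw\eta_J):=\ell(ww_J)$. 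Writing $\eta_K=\bigl(\sum_{y}(-1)^{\ell(y)}\dot{y}\bigr)\eta_J$ for $K=J\cup\{k\}$ (the sum over the minimal--length coset representatives $y$ for $W_K/W_J$, which all lie in $X_J\cap W_K$) and left--multiplying by $u''\dot{v}$ with $v\in X_K$, $u''\in{\bf U}_{w_Kv^{-1}}$, one gets
$$u''v\eta_K=\sum_{y}(-1)^{\ell(y)}\,u''\,\dot{(vy)}\,\eta_J .$$
Each $vy$ lies in $X_J$, so the right side is a signed sum of members of $\mathcal{B}$; its unique summand of maximal degree is that with $y=w_Kw_J$, which (since ${\bf U}_{w_J(vw_Kw_J)^{-1}}={\bf U}_{w_Kv^{-1}}$) is precisely $\pm$ the basis vector indexed by $(vw_Kw_J,u'')$, and the relation $\mathscr{R}\bigl((vw_Kw_J)w_J\bigr)=\mathscr{R}(vw_K)\supseteq K$ shows $vw_Kw_J\in X_J\setminus Y_J$; every other summand has strictly smaller degree. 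Thus each standard generator of $\mathbb{M}(\op{tr})_J'$ is a signed sum of members of $\mathcal{B}$ whose top term is indexed outside $Y_J$.

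Surjectivity of $\pi|_{\mathbb{M}(\op{tr})_J'}$ follows by induction on $\deg$. Given $w\in X_J\setminus Y_J$ and $u\in{\bf U}_{w_Jw^{-1}}$, pick $k\in\mathscr{R}(ww_J)\setminus J$ and set $v:=\min\bigl((ww_J)W_{J\cup\{k\}}\bigr)$, so that $w=vw_{J\cup\{k\}}w_J$; the displayed identity then writes $uw\eta_J$, modulo $\mathbb{M}(\op{tr})_J'$, as a signed sum of members of $\mathcal{B}$ of strictly smaller degree, and iterating lands in $\sum_{w\in Y_J}\mathbb{F}{\bf U}_{w_Jw^{-1}}w\eta_J$ (with $\mathbb{F}{\bf U}w\eta_J=\mathbb{F}{\bf U}_{w_Jw^{-1}}w\eta_J$ supplied by Proposition~\ref{MJ=KUW}). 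This establishes the spanning half of the proposition.

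The injectivity of $\pi|_{\mathbb{M}(\op{tr})_J'}$ is the crux, and I expect it to be the main obstacle. The problem is that the standard generators of $\mathbb{M}(\op{tr})_J'$ are \emph{redundant}: a position $(w,u)$ with $\abs{\mathscr{R}(ww_J)\setminus J}=r$ appears as the top term of $r$ distinct generators (one per $k\in\mathscr{R}(ww_J)\setminus J$), so injectivity cannot be read off from leading terms and the cancellations among generators must be tracked. I would argue by induction on $\abs{I\setminus J}$, the base case $J=I$ being Proposition~\ref{MJ=KUW} (where $\mathbb{M}(\op{tr})_I'=0$ and $Y_I=X_I$); for the inductive step one uses $\mathbb{M}(\op{tr})_{J\cup\{k\}}'\subseteq\mathbb{M}(\op{tr})_J'$ and the already--known bases of the $E_{J\cup\{k\}}$ to show that a hypothetical nonzero element of $\mathbb{M}(\op{tr})_J'$ lying in $\sum_{w\in Y_J}\mathbb{F}{\bf U}_{w_Jw^{-1}}w\eta_J$ is, after removing its top--degree part by means of the identity above, forced into an intersection of the same shape for a strictly larger parabolic subset of $I$, contradicting the inductive hypothesis. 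Organizing this bookkeeping of the redundant generators — equivalently, describing precisely how $\mathbb{M}(\op{tr})_J'$ meets the basis $\mathcal{B}$ — is where essentially all of the remaining work lies.
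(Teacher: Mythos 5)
The paper cites this result from reference [D] without reproducing a proof, so there is no in-paper argument to compare against; I assess the proposal on its own terms. Your framing via the projection $\pi$ and your spanning argument are both correct: the leading-term identity for $u''v\eta_K$, the verifications that each $vy$ lies in $X_J$, that ${\bf U}_{w_Kv^{-1}}={\bf U}_{w_J(vw_Kw_J)^{-1}}$, and that the top index $vw_Kw_J$ lies in $X_J\setminus Y_J$, all check out, and the degree-descending iteration terminates because $\ell(ww_J)\geq\ell(w_J)$ on $X_J$.

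The genuine gap is the injectivity, which you flag yourself; but the sketch you give (``removing its top-degree part $\ldots$ forced into an intersection of the same shape for a strictly larger parabolic subset'') does not describe a step that works, since a nonzero $\xi\in\mathbb{M}(\op{tr})_J'\cap\bigoplus_{w\in Y_J}\mathbb{F}{\bf U}_{w_Jw^{-1}}w\eta_J$ has no reason, after subtracting its top-degree piece, to land in $\mathbb{M}(\op{tr})_K'$ for any single larger $K$. The missing ingredient is to use the inductive hypothesis differently: applied (together with your spanning argument) to every $K\supsetneq J$, it gives $\mathbb{M}(\op{tr})_K=(\bigoplus_{v\in Y_K}\mathbb{F}{\bf U}_{w_Kv^{-1}}v\eta_K)\oplus\mathbb{M}(\op{tr})_K'$, and iterating (upward through $I$) yields $\mathbb{M}(\op{tr})_J'=\sum_{K\supsetneq J}\bigoplus_{v\in Y_K}\mathbb{F}{\bf U}_{w_Kv^{-1}}v\eta_K$, a generating family with $v$ restricted to $Y_K$ rather than $X_K$. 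This restriction is precisely what removes the redundancy you worry about: $(K,v,u)\mapsto(vw_Kw_J,u)$ is then a genuine bijection onto $\{(w,u)\mid w\in X_J\setminus Y_J,\ u\in{\bf U}_{w_Jw^{-1}}\}$, with inverse $(w,u)\mapsto(\mathscr{R}(ww_J),\ ww_Jw_{\mathscr{R}(ww_J)},\ u)$; the condition $v\in Y_K$, i.e.\ $\mathscr{R}(vw_K)=K$, is exactly what makes $K$ recoverable from $w=vw_Kw_J$. Now any nonzero $\sum c_{K,v,u}\,uv\eta_K$ from this family has, in $\mathcal{B}$-coordinates, a top-degree part supported on \emph{pairwise distinct} basis vectors, all indexed outside $Y_J$; hence it cannot lie in $\bigoplus_{w\in Y_J}\mathbb{F}{\bf U}_{w_Jw^{-1}}w\eta_J$, and the same computation shows the reduced family is in fact a basis of $\mathbb{M}(\op{tr})_J'$. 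That closes the injectivity.
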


\begin{proof}  For $w\in W$, we set $h'_w=h_w  {\bf 1}_{\op{tr}}\in \mathbb{M}(\op{tr})$. Thus  $h'_{w_J} =(-1)^{\ell(w_J)}\eta_J$ for any $J\subset I$ by \cite[Lemma 2.6 (vi)]{KL}. According to Lemma \ref{wCJ} (c),  we get
$$\mathbb{M}(\op{tr})_J=\sum_{w\in X_J}\mathbb{F}{\bf U}w\eta_J= \sum_{w\in Y_J}\mathbb{F}{\bf U}w\eta_J+ \sum_{x\in X_J \setminus Y_J}\mathbb{F}{\bf U}h'_{xw_J}.$$
We claim that $\mathbb{M}(\op{tr})_J'=\displaystyle \sum_{x\in X_J \setminus Y_J}\mathbb{F}{\bf U}h'_{xw_J} $. For $x\in X_J \setminus Y_J$,   we see that $\mathscr{R}(xw_J)=K$ for some $K \supsetneq J$. Thus $xw_J=yw_K$ for some $y\in X_K$.
By  Lemma \ref{wCJ} (b), we have $ h'_{xw_J}= h'_{yw_K} \in  \mathbb{F} W\eta_K$ which implies  $ \displaystyle \sum_{x\in X_J \setminus Y_J}\mathbb{F}{\bf U}h'_{xw_J}  \subseteq \mathbb{M}(\op{tr})_J'$. On the other hand, we see that $X_K \subseteq X_J\backslash Y_J$ for any  $K \supsetneq J$. Therefore  we get $\mathbb{M}(\op{tr})_K \subseteq  \displaystyle \sum_{x\in X_J \setminus Y_J}\mathbb{F}{\bf U}h'_{xw_J}$ for any  $K \supsetneq J$.
The claim is proved and  we get
$$E_J=\mathbb{M}(\op{tr})_J/\mathbb{M}(\op{tr})_J'= \sum_{w\in Y_J} \mathbb{F} {\bf U} wC_J. $$
It is not difficult to see that $ {\bf U} wC_J= {\bf U}_{w_Jw^{-1}}wC_J$ for any $w\in Y_J$. Thus we obtain the first part.

Now we show that  the set $\{uwC_J \mid w\in Y_J, u\in {\bf U}_{w_Jw^{-1}} \}$ is a basis of $E_J$.  It is enough to prove that this set is linearly independent. Suppose that this set is  linearly dependent. Then there exist  $f_{u, w} \in  \mathbb{F}$ (not all zero) such that
  $$\sum_{w\in Y_J} \sum_{u\in {\bf U}_{w_Jw^{-1}} } f_{u, w} uwC_J=0.$$
Noting that  $E_J=\mathbb{M}(\op{tr})_J/\mathbb{M}(\op{tr})_J'$,  we have
$$ \sum_{w\in Y_J} \sum_{u\in {\bf U}_{w_Jw^{-1}} } f_{u, w} uw\eta_J \in \mathbb{M}(\op{tr})_J'.$$
Without loss of generality, we assume that $u_0=id$ for some $z\in Y_J$ with  $f_{u_0,z} \ne 0$.
Note that the ${\bf T}$-fixed subspace of $\mathbb{M}(\op{tr})_J$ is $\displaystyle \sum_{w\in X_J} \mathbb{F}w\eta_J $.
Since $z\eta_J$ is a ${\bf T}$-stable vector and $\mathbb{M}(\op{tr})_J'=\displaystyle \sum_{x\in X_J \setminus Y_J}\mathbb{F}{\bf U}h'_{xw_J} $,
it is not difficult to see that $z\eta_J$ is a linear combination of the following set
 $$\{w\eta_{J}\mid w\in Y_J, w\ne z\}\cup\{h'_{xw_J}\mid x\in X_J\backslash Y_J\}. $$
This is a contradiction by  Lemma \ref{wCJ} (c). The proposition is proved.
\end{proof}

\begin{Prop}{\cite[Proposition 2.7]{Xi}}\label{EJ}
If $J$ and $K$ are different subsets of $I$, then $E_J$ and $E_K$ are not isomorphic.
\end{Prop}

By the definition of $E_J$, there  exists a filtration of submodules for  $\mathbb{F}[{\bf G}/{\bf B}]$ whose  subquotients are $E_J$ $(J\subset I)$.
In the following of this paper, we prove the irreducibility of  $E_J$ for any $J\subset I$.
Combining Proposition \ref{EJ},  we get Theorem \ref{mainthm}.

\section{Self-enclosed subgroups}

This section contains some preliminaries and properties of unipotent groups which are  useful in later discussion.  As before, let ${\bf U}$ be the  unipotent radical of the Borel subgroup ${\bf B}$. For any $w\in W$, we set $$\Phi_w^-=\{\alpha \in \Phi^+ \mid w(\alpha)\in \Phi^- \}, \ \ \Phi_w^+=\{\alpha \in \Phi^+ \mid w(\alpha)\in \Phi^+ \}.$$
As before,  ${\bf U}_w$ (resp. ${\bf U}_w'$) is  the subgroup of ${\bf U}$ generated by all ${\bf U}_\alpha$  with $\alpha \in\Phi_w^-$ (resp. $\alpha\in\Phi_w^+$). The following properties are well known (see \cite{Car}).

\noindent (a) For $w\in W$ and any root  $\alpha\in \Phi$,  we have $\dot{w}{\bf U}_\alpha \dot{w}^{-1}={\bf U}_{w(\alpha)}$;

\noindent (b)  $ {\bf U}_w$ and ${\bf U}'_w$ are subgroups of  ${\bf U}$, and we have $\dot{w}{\bf U}'_w\dot{w}^{-1} \subset {\bf U}$;

\noindent (c) The multiplication map ${\bf U}_w\times{\bf U}_w'\rightarrow{\bf U}$ is a bijection;

\noindent (d) Let $\Phi^+ =\{\delta_1, \delta_2, \dots, \delta_m\}$. Then
 ${\bf U}= {\bf U}_{\delta_1}{\bf U}_{\delta_2}\dots {\bf U}_{\delta_m}$
and  each element  $u \in {\bf U}$ is uniquely expressible in the form
$u=u_1u_2\dots u_m$ with $u_{i}\in  {\bf U}_{\delta_i}$;

\noindent (e) ({\it Commutator relations}) Given two positive roots $\alpha$ and $\beta$,  there exist a total ordering on $\Phi^+$ and integers $c^{mn}_{\alpha \beta}$ such that
$$[\varepsilon_\alpha(a),\varepsilon_\beta(b)]:=\varepsilon_\alpha(a)\varepsilon_\beta(b)\varepsilon_\alpha(a)^{-1}\varepsilon_\beta(b)^{-1}=
\underset{m,n>0}{\prod} \varepsilon_{m\alpha+n\beta}(c^{mn}_{\alpha \beta}a^mb^n)$$
for all $a,b\in \Bbbk $, where the product is over all
integers $m,n>0$ such that $m\alpha+n\beta \in \Phi^{+}$, taken
according to the chosen ordering.

As before,   let $\Phi^+ =\{\delta_1, \delta_2, \dots, \delta_m\}$ and for an element $u\in {\bf U}$, we have
$u=x_1x_2\dots x_m$ with $x_i\in  {\bf U}_{\delta_i}$. If we choose another order of $\Phi^+$ and write $\Phi^+=\{\delta'_1, \delta'_2, \dots, \delta'_m\}$, we get another expression of $u$ such that
$u=y_1y_2\dots y_m$ with $y_i\in  {\bf U}_{\delta'_i}$. If $\delta_i=\delta'_j =\alpha$ is a simple root, by the commutator relations of root subgroups, we get $x_i=y_j$ which is called the ${\bf U}_{\alpha}$-component of $u$.
Noting that the simple roots are $\Delta=\{\alpha_1, \alpha_2, \dots, \alpha_n\}$ and each $\gamma\in \Phi^+$ can be written as $\gamma=\displaystyle \sum_{i=1}^n k_i \alpha_i$, we denote by $\text{ht}(\gamma)=\displaystyle \sum_{i=1}^n k_i$ the height of $\gamma$. It is easy to see that $\displaystyle \prod_{\text{ht}(\gamma)\geq s} {\bf U}_{\gamma}$ is a subgroup of $\bf U$ for any fixed integer $s\in \mathbb{N}$ by the commutator relations of root subgroups.

Given an order ``$\prec$" on $\Phi^+$,   we list all the positive roots $\delta_1, \delta_2, \dots, \delta_m$ with respect to this order such that
$\delta_i \prec \delta_j$ when  $i<j$. For any $u\in {\bf U}$, we have a unique expression in the form $u=u_1u_2\dots u_m$ with $u_i\in  {\bf U}_{\delta_i}$. Let $X$ be a subset of $\bf U$, we denote by
$$X\cap_{\prec} {\bf U}_{\delta_k}=\{u_{k}\in {\bf U}_{\delta_k} \mid \text{there exists}\  u\in X \ \text{such that}\  u=u_1u_2\dots u_k  \dots u_m\}.$$
It is easy to see that $X\cap {\bf U}_{\delta_k} \subseteq X\cap_{\prec} {\bf U}_{\delta_k}$.
Now let $H$ be a subgroup of ${\bf U}$ and we say that a subgroup $H \subset \bf U$ is self-enclosed with respect to the order ``$\prec$" if
$$H\cap_{\prec} {\bf U}_{\delta_k}= H\cap {\bf U}_{\delta_k} \ \text{for any}\  k=1,2, \dots, m.$$
If $H$ is self-enclosed with respect to any order on $\Phi^+$,  then we say that $H$ is a self-enclosed subgroup of $\bf U$.

Let $H$ be a self-enclosed subgroup of $\bf U$. For each $\gamma \in \Phi^+$, we set
$H_{\gamma}=H \cap {\bf U}_{\gamma}$. Then we have
$$H= H_{\delta_1} H_{\delta_2}\dots  H_{\delta_m}.$$
For $w\in W$, set $H_w= H\cap {\bf U}_w$.  Then it is easy to see that $H_w$ is also a self-enclosed subgroup and we have $\displaystyle H_w= \prod_{\gamma\in \Phi^{-}_w} H_{\gamma}$.

\begin{example}  \normalfont
Suppose $\Bbbk=\bar{\mathbb{F}}_q$ and $\{\delta_1, \delta_2, \dots, \delta_m\}$ are all the positive roots such that
$\text{ht}(\delta_1)\leq \text{ht}(\delta_2) \leq \dots \leq  \text{ht}(\delta_m)$. Assume that ${\bf U}$ is defined over $\mathbb{F}_q$ and  let $U_{q^a}$ be the set of $\mathbb{F}_{q^a}$-points of $\bf U$.
Given $a_1, a_2,\dots, a_m \in \mathbb{N}$ such that $a_i$ is divisible by $a_j$ for any $i < j$,   we  set
$$H= U_{\delta_1, q^{a_1}}  U_{\delta_2, q^{a_2}} \dots  U_{\delta_m, q^{a_m}}.$$
Then it is not difficult to check that $H$ is a self-enclosed subgroup of $\bf U$.
\end{example}

Now let $H$ be a subgroup of $\bf U$. Let ${\bf V}$ be a subgroup of ${\bf U}$ which has the form
${\bf V}= {\bf U}_{\beta_1}{\bf U}_{\beta_2}\dots {\bf U}_{\beta_k}$. We let
$${\bf U}= \displaystyle \bigcup_{x\in L} x  {\bf V} \quad \text{and} \quad {\bf U}= \displaystyle \bigcup_{y\in R}  {\bf V}  y,$$
where $L$ (resp. $R$) is a set of the left (resp. right) coset representatives of ${\bf V}$ in ${\bf U}$. Then we define
the following two sets:
$$H_{{\bf V}}=\{v\in {\bf V} \mid \text{there exists}\  u \in H \ \text{such that} \ u=xv \ \text{for some} \ x\in L\},$$
$${}_{\bf V}H=\{v\in {\bf V} \mid \text{there exists}\  u \in H \ \text{such that} \ u=vy \ \text{for some} \ y\in R\}.$$

\begin{Prop}
Let $H$ be a self-enclosed subgroup of $\bf U$. Let  ${\bf V}$  be a subgroup of  $\bf U$ with the form ${\bf V}= {\bf U}_{\beta_1}{\bf U}_{\beta_2}\dots {\bf U}_{\beta_k}$, where   $\beta_1, \beta_2, \dots, \beta_k\in \Phi^+$. Then we have
$$H_{{\bf V}}= {}_{\bf V}H= H \cap {\bf V}.$$
\end{Prop}

\begin{proof} We just prove that $H_{{\bf V}}= H \cap {\bf V}$. It is clear that $H \cap {\bf V} \subset H_{{\bf V}}$.
Noting that ${\bf V}$ is a subgroup of ${\bf U}$,  we denote $${\bf U}= {\bf U}_{\gamma_1}{\bf U}_{\gamma_2}\dots {\bf U}_{\gamma_l}{\bf U}_{\beta_1}{\bf U}_{\beta_2}\dots {\bf U}_{\beta_k}.$$
Let  $v\in  H_{{\bf V}}$.
Thus there exists  $h \in H$ such that $h=xv$ for some $x\in L$. We write
$$h= x_{\gamma_1}x_{\gamma_2}\dots x_{\gamma_l}v_{\beta_1}v_{\beta_2}\dots v_{\beta_k},  \quad  x_{\gamma_i}\in {\bf U}_{\gamma_i}, v_{\beta_j}\in {\bf U}_{\beta_j}.$$
Since $H$ is self-enclosed, we see that $v_{\beta_j}\in H \cap {\bf U}_{\beta_j}$ which implies that $v\in H \cap {\bf V}$. Therefore we get
$H_{{\bf V}}= H \cap {\bf V}$. Similarly,  we have  ${}_{\bf V}H= H \cap {\bf V}$.  The proposition is proved.
\end{proof}

Now we consider the special case that $\Bbbk$ is a field of positive characteristic $p$. In this case, it is well known that  all the finitely generated subgroups of ${\bf U}$ are finite $p$-groups. We have the following lemma.

\begin{lemma}\label{SCgroup}
Let $X$ be a finite subset of ${\bf U}$. There exists a finite $p$-subgroup $H$ of ${\bf U}$ such that $H\supseteq X$ and $H$ is self-enclosed.
\end{lemma}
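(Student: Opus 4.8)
The plan is to build $H$ from its ``root components.'' Since we are in characteristic $p$, every finitely generated subgroup of $\bf U$ is a finite $p$-group, so it suffices to produce a subgroup containing $X$ that is finitely generated and has the right product decomposition. Concretely, I would look for, for each $\gamma\in\Phi^{+}$, a finite additive (equivalently, $\mathbb{F}_p$-linear) subgroup $A_{\gamma}\subseteq\Bbbk$ enjoying two properties: (i) after fixing once and for all some order on $\Phi^{+}$ and writing each $x\in X$ accordingly, every ${\bf U}_{\delta_i}$-component of $x$ lies in $\varepsilon_{\delta_i}(A_{\delta_i})$; and (ii) for all $\alpha\ne\beta$ in $\Phi^{+}$, all $a\in A_{\alpha}$, $b\in A_{\beta}$, and all $m,n>0$ with $m\alpha+n\beta\in\Phi^{+}$, one has $c^{mn}_{\alpha\beta}a^{m}b^{n}\in A_{m\alpha+n\beta}$. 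Granting (i) and (ii), I would set $H=\prod_{\gamma\in\Phi^{+}}\varepsilon_{\gamma}(A_{\gamma})$ (product in the fixed order).

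The first task is then to check that this $H$ works. By the commutator relations (e), condition (ii) is exactly the statement that the usual collecting/straightening process for elements of $\bf U$ never leaves the product set; hence $H$ is closed under multiplication and inversion, is a subgroup, and coincides with $\prod_{\gamma}\varepsilon_{\gamma}(A_{\gamma})$ computed in \emph{any} order on $\Phi^{+}$. It is generated by the finite set $\bigcup_{\gamma}\varepsilon_{\gamma}(A_{\gamma})$, hence is a finite $p$-group, and $X\subseteq H$ by (i). Finally, self-enclosedness is immediate from uniqueness of the expression $u=u_{1}u_{2}\dots u_{m}$: for any order $\prec$ and any $k$, reading off the $\delta_{k}$-slot of an arbitrary element of $H=\varepsilon_{\delta_{1}}(A_{\delta_{1}})\cdots\varepsilon_{\delta_{m}}(A_{\delta_{m}})$ gives $H\cap_{\prec}{\bf U}_{\delta_{k}}=\varepsilon_{\delta_{k}}(A_{\delta_{k}})$, while if $u\in H$ happens to lie in ${\bf U}_{\delta_{k}}$ then by uniqueness all other slots of $u$ vanish, so $H\cap{\bf U}_{\delta_{k}}=\varepsilon_{\delta_{k}}(A_{\delta_{k}})$ as well.

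It remains to construct the $A_{\gamma}$, which is the only substantial point. I would proceed by a closure argument. Start with $A_{\gamma}^{(0)}$ equal to the $\mathbb{F}_p$-span of the ${\bf U}_{\gamma}$-components of the elements of $X$; then repeatedly enlarge, in each round replacing $A_{m\alpha+n\beta}$ by its sum with the $\mathbb{F}_p$-span of $\{c^{mn}_{\alpha\beta}a^{m}b^{n}\mid a\in A_{\alpha},\ b\in A_{\beta}\}$, over all relevant $\alpha,\beta,m,n$. Each such span is finite-dimensional: if $e_{1},\dots,e_{r}$ and $f_{1},\dots,f_{s}$ are $\mathbb{F}_p$-bases of $A_{\alpha}$ and $A_{\beta}$, then $a^{m}b^{n}$ lies in the $\mathbb{F}_p$-span of the finitely many monomials $e_{1}^{k_{1}}\cdots e_{r}^{k_{r}}f_{1}^{l_{1}}\cdots f_{s}^{l_{s}}$ with $\sum k_{i}=m$, $\sum l_{j}=n$; so every $A_{\gamma}$ stays finite. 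Termination is forced by heights: if $m\alpha+n\beta$ is a root with $m,n\ge1$, then $\operatorname{ht}(m\alpha+n\beta)\ge\operatorname{ht}\alpha+\operatorname{ht}\beta>\max(\operatorname{ht}\alpha,\operatorname{ht}\beta)$, so an enlargement of $A_{\gamma}$ is triggered only by the current $A_{\delta}$ with $\operatorname{ht}\delta<\operatorname{ht}\gamma$. Thus the $A_{\gamma}$ with $\gamma$ simple are never modified, and by induction on height, once all $A_{\delta}$ of height $<j$ have stabilized the $A_{\gamma}$ of height $j$ stabilize after one further round; since heights are bounded, the process halts, and the stable value satisfies (i) (it contains $A_{\gamma}^{(0)}$) and (ii) (by construction).

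I expect the main obstacle to be precisely the finiteness of this closure process: the nonlinear terms $a^{m}$ in the commutator formula force one to argue that $\mathbb{F}_p$-spans of bounded-degree monomials remain finite-dimensional, and one must genuinely use the strict increase of height under $(\alpha,\beta)\mapsto m\alpha+n\beta$ to prevent the enlargements from cascading without end. The verification that $\prod_{\gamma}\varepsilon_{\gamma}(A_{\gamma})$ is a subgroup independent of the chosen order is routine given the commutator relations, but should be carried out carefully since it underlies both the ``finite $p$-group'' and the ``self-enclosed'' parts of the claim.
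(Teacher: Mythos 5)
Your proposal is correct and takes essentially the same approach as the paper: both arguments build the root-group components height by height so that the Chevalley commutator relations carry them into what has already been constructed, using the strict increase of height under $(\alpha,\beta)\mapsto m\alpha+n\beta$ to guarantee termination, and then observe the result is a finitely generated (hence finite $p$-) subgroup that is self-enclosed by construction. The only cosmetic difference is that you phrase the recursion as a fixed-point iteration on additive $\mathbb{F}_p$-subspaces $A_\gamma\subseteq\Bbbk$, while the paper performs a single pass over $\Phi^+$ in height order, defining $H_k\subseteq{\bf U}_{\delta_k}$ to contain $X_k$ together with the $\delta_k$-components of $\langle H_1,\dots,H_{k-1}\rangle$; your version also spells out in more detail the verification that $\prod_\gamma\varepsilon_\gamma(A_\gamma)$ is a subgroup independent of the chosen order, which the paper leaves as ``not difficult to check.''
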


\begin{proof} Let  $\Phi^+=  \{\delta_1, \delta_2, \dots, \delta_m\}$ such that
$\text{ht}(\delta_1)\leq \text{ht}(\delta_2) \leq \dots \leq  \text{ht}(\delta_m)$.
For each $1 \leq k \leq m $, we set $X_{k}= X\cap_{\prec} {\bf U}_{\delta_k}$.
Let $H_1$  be the subgroup of ${\bf U}_{\delta_1}$,which is generated by $X_1$. Now we define the subgroup  $H_k$ by recursive step. Suppose that $H_1, H_2,\dots, H_{k-1}$ are defined, we set
$$Y_k= \langle H_1, H_2, \dots, H_{k-1}\rangle  \cap_{\prec} {\bf U}_{\delta_k}$$
and let $H_k$ be the subgroup of ${\bf U}_{\delta_k}$, which is generated by $X_k$ and $Y_k$. Now we have a series of subgroups $ H_1, H_2, \dots, H_{m}$ and then we set
$$H= \langle H_1, H_2, \dots, H_{m}\rangle $$
which is a finitely generated subgroup of ${\bf U}$. Thus $H$ is a finite $p$-subgroup of ${\bf U}$, which contains $X$ by its construction. Moreover, it is not difficult to check that $H$ is a self-enclosed of $\bf U$ using the  commutator relations of root subgroups.

\end{proof}

It is easy to verify that the intersection of  two self-enclosed subgroups of ${\bf U}$ is also  self-enclosed.  For a finite subset  $X$ of ${\bf U}$, there exists a minimal self-enclosed subgroup  $V$ containing $X$.  In this case, we also say that   $V$ is the self-enclosed subgroup generated by $X$.

\section{Non-vanishing property of  the augmentation}

In this section, we fix a subset $J\subset I$. By Proposition \ref{DesEJ}, we have $$E_J=\bigoplus_{w\in Y_J} \mathbb{F} {\bf U}_{w_Jw^{-1}}w C_J$$ as $\mathbb{F}$-vector space. For each $w\in Y_J$, we  denote by

$$\mathfrak{P}_w: E_J \rightarrow \mathbb{F}  {\bf U}_{w_Jw^{-1}} wC_J$$
the projection of vector spaces and by $$\epsilon_w:  \mathbb{F} {\bf U}_{w_Jw^{-1}}\dot{w}C_J \rightarrow \mathbb{F}$$
the augmentation (restricting on $w$) which takes the sum of the coefficients with respect to the natural basis, i.e., for $\xi= \displaystyle \sum_{x\in  {\bf U}_{w_Jw^{-1}} } a_x x  w C_J$, we set $\epsilon_w(\xi)= \displaystyle \sum_{x\in  {\bf U}_{w_Jw^{-1}} } a_x$. Now we denote by
$$\epsilon= \bigoplus_{w\in Y_J} \epsilon_w \mathfrak{P}_w:  E_J \rightarrow \mathbb{F}^{|Y_J|}$$
the augmentation on $E_J$.

When considering the irreducibility of Steinberg module, the non-vanishing property of the augmentation  is very crucial (see \cite[Lemma 2.5]{Yang} and \cite[Proposition 1.6]{PS}).  In this section, we show that the non-vanishing property also holds for the  augmentation $\epsilon$ defined above. Firstly we have the following lemma.

\begin{lemma}\label{firstlemma}
Let $\xi \in  E_J $ be a nonzero element. Then there exists  $g\in {\bf G}$ such that $\mathfrak{P}_e(g\xi)$ is nonzero.
\end{lemma}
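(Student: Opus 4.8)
The plan is to reduce the statement to the known non-vanishing property for the Steinberg module by a standard unipotent-averaging trick, then push the argument up inductively over $W_J$-orbit data. First I would write $\xi = \sum_{w \in Y_J} \mathfrak{P}_w(\xi)$ and pick, among all $w$ with $\mathfrak{P}_w(\xi) \neq 0$, one of minimal length, say $w_0$; so $\mathfrak{P}_{w_0}(\xi) = \sum_{x \in {\bf U}_{w_J w_0^{-1}}} a_x\, x w_0 C_J$ is a nonzero element of $\mathbb{F}{\bf U}_{w_J w_0^{-1}} w_0 C_J$. The goal is to find $g$ so that $g\xi$ has a nonzero component on $eC_J$, i.e.\ on the $w=e$ summand. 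The natural candidate for $g$ is something of the form $u \dot{w_0}^{-1}$ with $u \in {\bf U}$; applying $\dot{w_0}^{-1}$ should move the $w_0$-summand towards the $e$-summand, while Lemma \ref{suhlemma} controls what happens to the other summands and the minimality of $\ell(w_0)$ ensures the $e$-component does not get contributions with the wrong sign that could cancel.

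More precisely, the key step is a careful bookkeeping argument: for $w \in Y_J$ with $\mathfrak{P}_w(\xi) \neq 0$ we have $\ell(w) \geq \ell(w_0)$, and I want to show that $\mathfrak{P}_e(\dot{w_0}^{-1} x w C_J)$ is zero unless $w = w_0$, in which case it reduces to something like $\mathfrak{P}_e$ applied to $(f$-type$)$ terms coming from Lemma \ref{suhlemma}(ii) applied repeatedly along a reduced word for $w_0$. The point is that $\dot{w_0}^{-1} x w_0 C_J$, after conjugating $x$ across $\dot{w_0}^{-1}$ and using $\dot{w_0}^{-1}{\bf U}_{w_J w_0^{-1}}\dot{w_0}^{-1}$-type relations together with parts (i)--(iii) of Lemma \ref{suhlemma}, lands in $\mathbb{F}{\bf U} C_J$ with a controllable $eC_J$-coefficient. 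For the generic $x$ this coefficient is nonzero, and since the $a_x$ are not all zero, after possibly also averaging over a suitable self-enclosed finite (or cofinite) unipotent subgroup $H$ as in Lemma \ref{SCgroup} to kill off the non-generic strata, we obtain $\epsilon_e(\mathfrak{P}_e(g\xi)) \neq 0$, hence $\mathfrak{P}_e(g\xi) \neq 0$.

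The main obstacle I anticipate is the potential cancellation in the $eC_J$-component: the element $\xi$ is a finite sum over a possibly large set of $x \in {\bf U}_{w_J w_0^{-1}}$, and different $x$'s may, after applying $g$, contribute to the same basis vector $v C_J$ with coefficients that could sum to zero. Handling this is exactly where the self-enclosed subgroups of Section 3 are designed to help: one enlarges the (finite) support of $\xi$ to a self-enclosed subgroup $H$, so that the combinatorics of the $x$-coordinates becomes ``closed'' under the relevant commutator relations, and the map $x \mapsto$ ($eC_J$-coefficient of $\dot{w_0}^{-1} x w_0 C_J$) becomes, on cosets of $H$, either identically zero or a genuine translation of the augmentation, which cannot vanish identically on a nonzero class function. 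In the case $\operatorname{char}\Bbbk = 0$ one argues instead by a direct limit / finite-subfield reduction as in \cite{PS}. Once $\mathfrak{P}_e(g\xi) \neq 0$ is established, the lemma is proved; I expect this to then feed directly into the full non-vanishing of $\epsilon$ (and ultimately the irreducibility of $E_J$) in the next section.
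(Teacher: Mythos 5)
Your overall strategy -- take the minimal-length $w_0$ with $\mathfrak{P}_{w_0}(\xi)\neq0$ and use Lemma~\ref{suhlemma} to push it down to the $e$-component -- is the same as the paper's, but there is a genuine gap in how you handle the cancellation, and this is exactly the hard point of the lemma.

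You assert (``I want to show'') that $\mathfrak{P}_e(\dot{w_0}^{-1}xwC_J)$ vanishes unless $w=w_0$, and that minimality of $\ell(w_0)$ prevents contributions ``with the wrong sign.'' Neither of these is true in general, and the paper's proof is built precisely around their failure. When one peels off a single simple reflection $s$ with $\sigma=sh<h$, the summands $\mathfrak{P}_v(\xi)$ with $v\in Y_J$, $sv\notin Y_J$ but $\mathfrak{P}_\sigma(svC_J)\neq 0$ (the set called $\spadesuit$ in the paper) \emph{do} contribute to the $\sigma$-component of $\dot{s}\xi$, and these contributions can cancel the one coming from $\mathfrak{P}_h(\xi)$. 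The cancellation you flag -- among different $x$ within the $w_0$-component -- is not the dangerous one; it is the interference between the $h$-term and the $\spadesuit$-terms that must be controlled. The paper's fix is purely combinatorial: if $\mathfrak{P}_\sigma(\dot{s}\xi)=0$ it instead considers $\dot{s}y\xi$ for a suitable $y\in{\bf U}_s$, chosen (using only that ${\bf U}$ is infinite) so that every $x$ supporting $\mathfrak{P}_h(\xi)$ now has nontrivial ${\bf U}_s$-component, which by Lemma~\ref{suhlemma}(ii) kills the $h$-contribution to $\mathfrak{P}_\sigma$, while the $\spadesuit$-contribution survives (after tracking the functions $n(-)$, $\omega_y(-)$). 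This also explains why the proof proceeds one reflection at a time by induction on $\ell(h)$: conjugating $x$ across the whole $\dot{w_0}^{-1}$ in one shot leaves ${\bf U}$ and forces re-expression in the $E_J$-basis at every intermediate stage anyway.

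Your proposed machinery is also the wrong tool for this lemma. Self-enclosed subgroups and averaging are introduced for the characteristic-$p$ irreducibility argument in Section~5; Lemma~\ref{firstlemma} is characteristic-free and uses no subgroup of ${\bf U}$ at all, only a single well-chosen element $y\in{\bf U}_s$. Likewise, there is no ``$\operatorname{char}\Bbbk=0$ case via finite subfields'' here: in characteristic $0$ the group ${\bf U}$ has no nontrivial finite subgroups, so that route could not even get started, and the paper does not case on the characteristic in this proof. Finally, you aim for $\epsilon_e(\mathfrak{P}_e(g\xi))\neq 0$, which is the stronger non-vanishing of the augmentation proved in Proposition~\ref{Key}; Lemma~\ref{firstlemma} only needs $\mathfrak{P}_e(g\xi)\neq 0$, and Proposition~\ref{Key} is proved \emph{using} Lemma~\ref{firstlemma}, so proving the stronger statement first would be circular as presented.
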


\begin{proof} By Proposition \ref{DesEJ},   $\xi \in  E_J $  has  the following  expression
$$\xi= \sum_{w\in Y_J} \sum_{x\in {\bf U}_{w_Jw^{-1}}}a_{w,x} x w C_J.$$
Then there exists an element $h\in W$ with minimal length such that $a_{h,x}\ne 0$ for some $x\in  {\bf U}_{w_Jh^{-1}}$, which implies that $\mathfrak{P}_h(\xi)$ is nonzero. When $h=e$, the lemma is proved. Now suppose that $\ell(h)\geq 1$, so there is a simple reflection $s$ such that $\sigma=sh< h$. Without loss of generality, we can assume that $a_{h,id}\ne 0$.
We claim that either $\mathfrak{P}_{\sigma}(\dot{s} \xi) $ is nonzero or $\mathfrak{P}_{\sigma}(\dot{s}y \xi)$ is nonzero for some $y \in {\bf U}_s$.

If $\mathfrak{P}_{\sigma}(\dot{s} \xi) =0$, then according to Lemma \ref{suhlemma},  there exists at least one element  $v\in Y_J$ which satisfies  the following condition
$$(\spadesuit) \ sv \notin Y_J \  \text{and} \ \mathfrak{P}_{\sigma}(svC_J)\ne 0.$$
The subset of  $Y_J$ whose elements satisfy this condition is also denoted by $\spadesuit$. Thus $\mathfrak{P}_{\sigma}(\dot{s} \xi) =0$ tells us that
$$\mathfrak{P}_{\sigma}(\dot{s} \cdot \mathfrak{P}_h(\xi))+  \mathfrak{P}_{\sigma}(\dot{s} \cdot \sum_{v\in \spadesuit}\mathfrak{P}_v(\xi))=0.$$
In particular, we get $\displaystyle \mathfrak{P}_{\sigma}(\dot{s} \cdot \sum_{v\in \spadesuit}\mathfrak{P}_v(\xi)) \ne 0$.
Since ${\bf U}$ is infinite, there exists infinitely many $y\in {\bf U}_s$ such that the $ {\bf U}_s$-component of $yx$ is nontrivial for any $x$ with $a_{h, x} \ne 0$. For such an element $y$, we get  $\mathfrak{P}_{\sigma}(\dot{s} \cdot \mathfrak{P}_h(y\xi))=0$ by Lemma \ref{suhlemma} (b).

On the other hand, for $v\in \spadesuit$  and $a_{v,x}\ne 0$, we see that  the $ {\bf U}_s$-component of $x$ is trivial, i.e., $ x\in {\bf U}'_s$.
Note that ${\bf U}_{w_J \sigma^{-1} s}={( {\bf U}_{w_J \sigma^{-1}})}^{s} \cdot  {\bf U}_s$ and $ {\bf U}'_{w_J \sigma^{-1}} =  {({\bf U}'_{w_J \sigma^{-1}s})}^{s} \cdot {\bf U}_s$.  Then we can write $$x=n(x)p(x),   \quad  \text{where} \ n(x)\in   {({\bf U}_{w_J\sigma^{-1}})}^{s}  \ \text{and} \ p(x) \in  {\bf U}'_{w_J\sigma^{-1}s}.$$
Since this expression is unique,  we can regard $p(-)$ and $n(-)$ as functions on ${\bf U}'_s$. We let $yx= \omega_y(x)y$, where $\omega_y(x) \in {\bf U}'_s$. Using the commutator relations of root subgroups, we can choose $y$ such that
$$n(\omega_y(x'))\ne n(\omega_y(x))$$
unless $n(x)=n(x')$ since there are only finitely many $x's$ satisfying $a_{v,x} \ne 0$.
Therefore if we write $$ \mathfrak{P}_{\sigma}(\dot{s} \cdot \sum_{v\in \spadesuit}\mathfrak{P}_v(\xi))=\sum  b_{\sigma,x}   n(x)^{\dot{s}} \sigma C_J\ne 0,$$
it is not difficult to see that
$$\mathfrak{P}_{\sigma}(\dot{s} \cdot \sum_{v\in \spadesuit}\mathfrak{P}_v(y\xi))= \sum  b_{\sigma,x} n({\omega_y(x)})^{\dot{s}}\sigma C_J$$
which is also nonzero.  Therefore  $$\mathfrak{P}_{\sigma}(\dot{s}y \xi)= \mathfrak{P}_{\sigma}(\dot{s} \cdot \sum_{v\in \spadesuit}\mathfrak{P}_v(y\xi))\ne 0.$$
By the argument above, we can do induction on the length of $h$ and thus the lemma is proved.
\end{proof}

The  non-vanishing property of the augmentation $\epsilon$ on  $E_J$ is as follows:

\begin{Prop}\label{Key}
Let $\xi \in  E_J $ be a nonzero element. Then there exists $g\in {\bf G}$ such that  $\epsilon(g\xi)$ is nonzero.
\end{Prop}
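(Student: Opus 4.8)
The plan is to bootstrap from Lemma~\ref{firstlemma}, which already gives a group element $g_0$ with $\mathfrak{P}_e(g_0\xi)\neq 0$, and then act by suitable elements of $\mathbf{U}$ (the unipotent radical) to force the augmentation $\epsilon_e$ of the $eC_J$-component to be nonzero; since $\epsilon = \bigoplus_{w\in Y_J}\epsilon_w\mathfrak{P}_w$, making one summand nonzero suffices. So after replacing $\xi$ by $g_0\xi$ we may assume $\mathfrak{P}_e(\xi)\neq 0$, say $\mathfrak{P}_e(\xi)=\sum_{x\in \mathbf{U}_{w_J}} a_x\, x C_J$ with not all $a_x$ zero (here $\mathbf{U}_{w_Je^{-1}}=\mathbf{U}_{w_J}$). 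The goal is to produce $u\in\mathbf{U}$ such that $\mathfrak{P}_e(u\xi)$ still lies in $\mathbb{F}\mathbf{U}_{w_J}C_J$ (which is automatic: $\mathbf{U}$ acts on the $e$-slot by left translation on a subgroup, sending $xC_J$ to $(ux)C_J$ up to sorting back into $\mathbf{U}_{w_J}$ via Lemma~\ref{suhlemma}), and such that the coefficients no longer sum to zero.

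**Key steps.** First I would reduce to the case $\mathbf{B}=\mathbf{G}$ being irrelevant and work purely inside the finitely-generated situation: only finitely many $a_x$ are nonzero, so all the relevant group elements lie in a finitely generated subgroup of $\mathbf{U}$; when $\operatorname{char}\Bbbk=p>0$ this subgroup is finite, and by Lemma~\ref{SCgroup} we may enlarge it to a finite \emph{self-enclosed} $p$-subgroup $H\subset\mathbf{U}$ containing the supports. In characteristic $0$ one works with a finitely generated self-enclosed subgroup (the remark after Lemma~\ref{SCgroup} guarantees its existence). Second, the action of $u\in\mathbf{U}$ on $\mathfrak{P}_e(\xi)$: for $u\in\mathbf{U}$ and $x\in\mathbf{U}_{w_J}$, write (via property (c) of Section~3) $ux = x' x''$ with $x'\in\mathbf{U}_{w_J}$, $x''\in\mathbf{U}'_{w_J}$; since $w_J(\alpha)\in\Phi^+$ for $\alpha$ with $\mathbf{U}_\alpha\subset\mathbf{U}'_{w_J}$, Lemma~\ref{suhlemma}(i) gives $\dot{w_J} x'' \eta_J$-type cancellations, so $x C_J \mapsto x' C_J$ when $u$ acts — i.e. $u$ acts on $\mathfrak{P}_e(\xi)$ essentially as a permutation of the index set $\mathbf{U}_{w_J}$ modulo the right subgroup $\mathbf{U}\cap(\text{stabilizer})$. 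Hence $\epsilon_e(u\,\mathfrak{P}_e(\xi))=\sum_x a_x\,[\text{size of fiber of }x'\mapsto x]$ — a weighted count where the weights depend on $u$. The third step is the counting argument: choose $u$ (in characteristic $p$, inside $H$) so that the fibers collapse generically, isolating a single nonzero $a_{x_0}$; then the augmentation of the image equals $a_{x_0}\cdot|H_{\mathbf{V}}|$ for an appropriate $\mathbf{V}$, which is a power of $p$, hence nonzero in $\mathbb{F}$ unless $\operatorname{char}\mathbb{F}=p$ — and \emph{that} residual case is exactly where the self-enclosed machinery pays off: by varying $u$ over a transversal one gets that $\epsilon_e(u\xi)$, summed over $u$ in a coset, recovers $\sum_x a_x$ times a $p$-power times something, forcing nonvanishing of at least one $\epsilon_e(u\xi)$ by a pigeonhole / averaging argument that avoids dividing by $p$.

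**The main obstacle.** The delicate point is the case $\operatorname{char}\mathbb{F}=\operatorname{char}\Bbbk=p$. There, naive averaging over a $p$-group vanishes identically, so one cannot simply "sum over $\mathbf{U}$". The resolution — and the technically hardest part I would need to push through carefully — is to use the self-enclosed structure to filter $H$ by root height: peel off $\mathbf{U}_{\delta_k}$-components one at a time, using the commutator relations (property (e)) to control how acting by $\varepsilon_{\alpha}(c)$ with $\alpha$ low-height perturbs the higher components, and argue that as $c$ ranges over a transversal of $H_\alpha$ in $\mathbf{U}_\alpha$ the nonzero pattern of coefficients cannot be killed for \emph{every} choice. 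Concretely, I expect to argue by contradiction: if $\epsilon_e(u\xi)=0$ for all $u\in H$, then translating this identity by the various $\varepsilon_\alpha(c)$ and using that $f_i$ (from Lemma~\ref{suhlemma}) is a bijection on $\mathbf{U}_{\alpha_i}\setminus\{1\}$ yields a system of linear relations on $(a_x)$ over $\mathbb{F}_p$ that is self-enclosed-triangular, forcing all $a_x=0$, contradicting $\mathfrak{P}_e(\xi)\neq 0$. Proposition~\ref{Key} then follows, and with the proof of Lemma~\ref{firstlemma} in hand the reduction at the start of this proof is legitimate.
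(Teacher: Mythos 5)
There is a genuine gap, and it occurs right at the start of your strategy, before any of the counting or self-enclosed machinery gets going. After reducing to $\mathfrak{P}_e(\xi)\neq 0$, you propose to find $u\in\mathbf{U}$ such that $\epsilon_e\mathfrak{P}_e(u\xi)\neq 0$. This can never work: the decomposition $E_J=\bigoplus_{w\in Y_J}\mathbb{F}\,\mathbf{U}_{w_Jw^{-1}}wC_J$ is $\mathbf{U}$-stable (indeed $\mathbf{U}wC_J=\mathbf{U}_{w_Jw^{-1}}wC_J$), so $\mathfrak{P}_e(u\xi)=u\,\mathfrak{P}_e(\xi)$ for all $u\in\mathbf{U}$, and on the $e$-slot the map $x\mapsto x'$ (where $ux=x'x''$, $x'\in\mathbf{U}_{w_J}$, $x''\in\mathbf{U}'_{w_J}$) is a genuine \emph{bijection} of $\mathbf{U}_{w_J}$ onto itself, not a many-to-one collapse — given $x'$, the fibre is the single element $\mathbf{U}_{w_J}\cap u^{-1}x'\mathbf{U}'_{w_J}$. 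There is no ``stabilizer'' to mod out by and no nontrivial weights; each $\epsilon_w\mathfrak{P}_w$ is literally invariant under left multiplication by $\mathbf{U}$. So if $\sum_x a_{e,x}=0$ it stays $0$ for every $u\in\mathbf{U}$, in every characteristic, and the subsequent pigeonhole/averaging discussion has nothing to leverage.

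The missing idea is that one must act by elements that \emph{do not} preserve the grading, namely representatives $\dot{s}$ of simple reflections together with unipotents $y\in\mathbf{U}_s$. That is what the paper does: it introduces a family of conditions $\heartsuit_h$ (for $h\in W_J$, roughly: the partial sum of the $a_{e,x}$ over $x\in\mathbf{U}'_h$ is nonzero) and runs a downward induction on $\ell(h)$. One starts at $h=w_J$, where $\heartsuit_{w_J}$ is just $a_{e,\mathrm{id}}\neq 0$ (achievable after Lemma~\ref{firstlemma}), and in each step factors $h=\tau s$ and shows that either $\dot{s}\xi$ or $\dot{s}y\xi$ for some $y\in\mathbf{U}_s$ satisfies $\heartsuit_\tau$ — the point being that $\dot{s}$ both shifts $\mathbf{U}'_h$ toward $\mathbf{U}'_\tau$ inside the $e$-slot (via Lemma~\ref{suhlemma}(iii)) and pushes unwanted contributions from other $\mathfrak{P}_v$ into the $e$-slot, which are then controlled by choosing $y$ generically in the infinite group $\mathbf{U}_s$. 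Reaching $h=e$ gives $\epsilon_e\mathfrak{P}_e(g\xi)\neq 0$. Note also that the paper's proof of this proposition is characteristic-uniform and does not use self-enclosed subgroups or finite $p$-group averaging at all; that apparatus enters only in Section~5, for the final irreducibility argument in the case $\operatorname{char}\mathbb{F}=\operatorname{char}\Bbbk=p$. Your deployment of it here is misplaced and, because of the bijection issue above, would not close the gap even if it were the right tool.
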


\begin{proof} By Lemma \ref{firstlemma}, we can assume that $\mathfrak{P}_e(\xi)$ is nonzero. For $$\xi= \sum_{w\in Y_J} \sum_{x\in {\bf U}_{w_Jw^{-1}}}a_{w,x} x w C_J \in  E_J,$$
we say that $\xi$ satisfies the condition $\heartsuit_h$  if $\displaystyle \sum_{x\in {\bf U}'_h} a_{e,x}\ne 0$ for some $h\in W_J$. We prove the following claim: if $\xi$ satisfies the condition $\heartsuit_h$ for some $h\in W_J$, then there exists $g\in {\bf G}$ such that  $\epsilon_e \mathfrak{P}_e(g\xi)$ is nonzero.

We prove this claim  by induction on the length of $h$. If $h=e$, then it is obvious that $\epsilon_e \mathfrak{P}_e(\xi)= \displaystyle \sum_{x\in {\bf U}_{w_J}} a_{e,x}$  which  is already nonzero. We assume that the claim is valid for any $h\in W_J$ with $\ell(h)\leq m$. Now let $h\in W_J$ with $\ell(h)=m+1$ such that  $\displaystyle \sum_{x\in {\bf U}'_h} a_{e,x}\ne 0$.  We have $h=\tau s$ for some $s\in \mathscr{R}(h)$.
Then ${\bf U}_h= {\bf U}^s_\tau \cdot  {\bf U}_s$ and  ${\bf U}'_{\tau}=({\bf U'_h})^s \cdot {\bf U}_s$ by definition.
Now our aim is to show that there exists $g\in {\bf G}$ such that $g\xi$  satisfies the condition $\heartsuit_\tau$.

Firstly we prove  that the element $\dot{s} \cdot \mathfrak{P}_e(\xi)$
satisfies the condition $\heartsuit_\tau$. Since ${\bf U}_{w_J}= {\bf U}'_h{\bf U}_h= {\bf U}'_h{\bf U}^s_\tau {\bf U}_s $, each element $x\in {\bf U}_{w_J}$ has a unique expression
$$x=x'_h x_{\tau} x_s, \quad x'_h \in{\bf U}'_h, x_{\tau}\in {\bf U}^s_\tau , x_s \in  {\bf U}_s. $$
We just need to consider the coefficients of $a_{e, x}$ with $\dot{s} x'_h x_{\tau}  \dot{s}^{-1} \in {\bf U}'_{\tau}$, which implies that $x_{\tau}=id$.
For the case $x_s\ne id$, using Lemma \ref{suhlemma} (c), we have
$$\dot{s} x C_J=x''_h \dot{s}x_s  C_J =  x''_h (f(x_s)-1) C_J, $$
 where  $x''_h= \dot{s} x'_h \dot{s}^{-1} \in ({\bf U'_h})^s \subset {\bf U}'_{\tau}$ and  $f(x_s)\in {\bf U}_s$. Therefore if we write
 $$\dot{s} \cdot \mathfrak{P}_e(\xi)= \sum_{x\in {\bf U}_{w_J}}b_{e,x} x C_J, $$
 then $\displaystyle \sum_{x\in {\bf U}'_\tau} b_{e,x} =  -\sum_{x\in {\bf U}'_h} a_{e,x}\ne 0$. Thus  $\dot{s} \cdot \mathfrak{P}_e(\xi)$
 satisfies the condition $\heartsuit_\tau$.

Now we consider
$\dot{s}\xi$ and if $\dot{s}\xi$ satisfies the condition $\heartsuit_\tau$,  we are done. Otherwise there exists at least one element  $v\in Y_J$ which  satisfies  the following condition
$$(\clubsuit): \ sv \notin Y_J \  \text{and} \ \mathfrak{P}_{e}(svC_J)\ne 0.$$
The subset of  $Y_J$ whose elements satisfy this condition is also denoted by $\clubsuit$.
With this setting,
$\displaystyle \dot{s} \cdot \mathfrak{P}_e(\xi)+  \mathfrak{P}_{e}(\dot{s} \cdot \sum_{v\in \clubsuit}\mathfrak{P}_v(\xi))$ does not satisfy the condition $\heartsuit_\tau$, which implies that $\displaystyle  \mathfrak{P}_{e}(\dot{s} \cdot \sum_{v\in \clubsuit}\mathfrak{P}_v(\xi))$ satisfies the condition $\heartsuit_\tau$ since we have proved that $\dot{s} \cdot \mathfrak{P}_e(\xi)$ satisfies the condition $\heartsuit_\tau$.
 Since ${\bf U}$ is infinite, we can choose an element $y\in {\bf U}_s$ such that  the  $ {\bf U}_s$-component of $yx$  is nontrivial for any $x$ with $a_{e, x} \ne 0$. Then we consider the element $\dot{s}y\xi$. Using Lemma \ref{suhlemma} (c), it is easy to see that
$\dot{s}\cdot \mathfrak{P}_e(y\xi)$ does not satisfy the condition $\heartsuit_\tau$.

Now for  $v\in \clubsuit$  and $x\in {\bf U}_{w_Jv^{-1}}$ with $a_{v,x}\ne 0$,   noting that  the ${\bf U}_s$-component of $x$ is trivial,  we write
$$x= m(x) q(x),  \quad \text{where} \ m(x) \in {\bf U}_{w_Js},  q(x)\in ({\bf U}'_{w_J})^s.$$
For $y\in  {\bf U}_s$, using the  commutator relations of root subgroups,  we have
$$ym(x)= m_{y}(x) y,  \quad  \text{where}\  m_y(x) \in {\bf U}_{w_Js},$$
and
$$ yq(x)=q_y(x) y,  \quad \text{where}\    q_y(x)  \in  ({\bf U}'_{w_J})^s.$$
Since ${\bf U}'_{\tau}= ({\bf U}'_h)^s {\bf U}_s$,  we get $m(x)^{\dot{s}}\in {\bf U}'_{\tau}$  if  and only if  $m(x)\in ({\bf U}'_h)^s $.
Thus  $m_y(x)^{\dot{s}} \in {\bf U}'_{\tau}$ if and only if $m(x)^{\dot{s}} \in {\bf U}'_{\tau}$.
Therefore if we write $$ \mathfrak{P}_{e}(\dot{s} \cdot \sum_{v\in\clubsuit}\mathfrak{P}_v(\xi))=\sum  b_{x}   m(x)^{\dot{s}} C_J,$$
it is not difficult to see that
$$ \mathfrak{P}_{e}(\dot{s} \cdot \sum_{v\in \clubsuit}\mathfrak{P}_v(y\xi))=\sum  b_{x}   m_y(x)^{\dot{s}} C_J.$$
Noting that $\displaystyle  \mathfrak{P}_{e}(\dot{s} \cdot \sum_{v\in \clubsuit}\mathfrak{P}_v(\xi))$ satisfies the condition $\heartsuit_\tau$, we see that
$\displaystyle  \mathfrak{P}_{e}(\dot{s} \cdot \sum_{v\in \clubsuit}\mathfrak{P}_v(y\xi))$ satisfies the condition $\heartsuit_\tau$.
Finally there exists $g\in {\bf G}$ such that $g\xi$  satisfies the condition $\heartsuit_e$, which implies that  $\epsilon_e \mathfrak{P}_e(g\xi)$ is nonzero.  We have proved our claim.

Now we can assume that $a_{e,id}\ne 0$. Thus the element $\xi$ satisfies the condition $\heartsuit_{w_J}$. According to our claim,  there exists $g\in {\bf G}$ such that  $\epsilon_e \mathfrak{P}_e(g\xi)$ is nonzero. In particular, $\epsilon(g\xi)$ is nonzero and the proposition is proved.
\end{proof}

\section{Proof of the main theorem}
In this section, we give the proof of Theorem \ref{mainthm}. Firstly,  we deal with the cases:   (1) $\text{char} \ \Bbbk=0$;  (2) $\text{char} \ \Bbbk>0$
and $\text{char} \ \Bbbk\ne \text{char} \ \mathbb{F}$.  For $J\subset I$,  we show that any nonzero submodule $M$ of $E_J$ contains $C_J$, and hence $M=E_J$.
In particular,  $E_J$ is  irreducible for any $J\subset I$.
Let $\xi \in M$ be a nonzero element with the following expression
$$\xi= \sum_{w\in Y_J} \sum_{x\in {\bf U}_{w_Jw^{-1}}}a_{w,x} x w C_J \in M.$$
By Proposition \ref{Key}, we can assume that $\epsilon (\xi) \ne 0$.
In the case (1) by \cite[Proposition 5.4]{PS} and in the case (2)  by   \cite[Proposition 6.7]{PS},   we have
$$\sum_{w\in Y_J}\sum_{x\in {\bf U}_{w_Jw^{-1}}}a_{w,x}  w C_J \in M.$$
In particular, we see that
$$M\cap \sum_{w\in Y_J}\mathbb{F} w C_J\neq0.$$
Noting that the discussion in the proof of  \cite[Claim 2]{CD1} is still valid in our general setting,  we see that  $E_J$ is irreducible for any $J\subset I$.
The $E_J$'s are  pairwise non-isomorphic by Proposition \ref{EJ}.

\bigskip

It remains to consider the case $\text{char} \ \Bbbk=  \text{char} \ \mathbb{F}=p>0$. From now on, we assume that $\text{char} \ \mathbb{F}=  \text{char} \ \Bbbk=p$. For any finite subset $X$ of ${\bf G}$, let $\underline{X}:=\sum_{x\in X}x \in \mathbb{F} {\bf G}$. The following lemma is easy to get and will   be very useful in our later discussion.

\begin{lemma} \label{easylemma}
Let $P$ be a finite abelian $p$-group  such that  $P=H\times K$, where $H, K$ are two subgroups of $P$. Let $H'$ be a subgroup of $P$ such that $|H'|=|H|$. Then $\underline{H'}\ \underline{K}= 0$ or  $\underline{P}$.

\end{lemma}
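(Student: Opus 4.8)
The plan is to work in the group algebra $\Bbbk G$ where $p = \operatorname{char}\Bbbk$ divides $|G|$ whenever a subgroup is nontrivial, and to exploit the fact that for any subgroup $L$ of the abelian $p$-group $G$ the element $\underline{L}$ is idempotent-like: concretely $gL = L$ as sets for $g\in L$, so $g\,\underline{L} = \underline{L}$, and hence $\underline{L}\,\underline{L} = |L|\,\underline{L} = 0$ in $\Bbbk$ unless $|L|=1$. More useful here is the observation that for any subgroup $L$ and any $g \in G$, the product $g\,\underline{L}$ is the indicator of the coset $gL$, and $\underline{L}\,\underline{M} = |L\cap M|\,\underline{LM}$ when $L,M$ are subgroups; in characteristic $p$ this is $\underline{LM}$ if $L\cap M = 1$ and $0$ otherwise.

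First I would reduce to understanding $\underline{H'}\,\underline{K}$ directly. Decompose $H'$ into the cosets of the subgroup $H'\cap K$: write $H' = \bigsqcup_{i} h_i (H'\cap K)$, so that $\underline{H'} = \sum_i h_i\,\underline{H'\cap K}$. Then $\underline{H'}\,\underline{K} = \bigl(\sum_i h_i\bigr)\,\underline{H'\cap K}\,\underline{K}$. Now $\underline{H'\cap K}\,\underline{K} = |H'\cap K|\,\underline{K}$, which vanishes in $\Bbbk$ as soon as $H'\cap K \neq 1$; this already gives $\underline{H'}\,\underline{K} = 0$ in the case $H'\cap K\neq 1$. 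So the interesting case is $H'\cap K = 1$, and there I would use $|H'| = |H|$ together with $|H|\cdot|K| = |G|$ to conclude $|H'|\cdot|K| = |G|$, hence $H'K = G$ (the product of subgroups has order $|H'||K|/|H'\cap K| = |G|$). Then $\underline{H'}\,\underline{K} = \underline{H'K} = \underline{G}$ because the multiplication map $H'\times K \to G$ is a bijection, so every element of $G$ appears exactly once in the product $\underline{H'}\,\underline{K}$.

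Putting the two cases together: either $H'\cap K\neq 1$, giving $\underline{H'}\,\underline{K}=0$, or $H'\cap K = 1$, giving $\underline{H'}\,\underline{K} = \underline{G}$. This is exactly the dichotomy claimed. I do not anticipate a genuine obstacle here — the only point requiring a little care is the coset-decomposition step and the bookkeeping that $\underline{L}\,\underline{L} = |L|\,\underline{L} = 0$ in $\Bbbk$ for $|L|>1$, which is where the hypothesis $\operatorname{char}\Bbbk = p$ and the $p$-group assumption are used; the equality $|H|=|H'|$ is used only to upgrade $H'\cap K=1$ to $H'K = G$. One should also note explicitly that the statement is symmetric enough that no nondegeneracy of $H'$ itself is needed: if $H' = 1$ then $|H| = 1$, $K = G$, and $\underline{H'}\,\underline{K} = \underline{G}$, consistent with the $H'\cap K = 1$ branch.
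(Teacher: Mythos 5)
Your proof is correct and complete. The paper states Lemma \ref{easylemma} without proof (introducing it as ``easy to get''), so there is no argument to compare against; your dichotomy on whether $H'\cap K$ is trivial, using the coset decomposition $\underline{H'}\,\underline{K}=|H'\cap K|\bigl(\sum_i h_i\bigr)\underline{K}$ and, in the trivial-intersection case, the cardinality count $|H'|\cdot|K|=|H|\cdot|K|=|G|$ to see that $H'\times K\to G$ is a bijection, is the natural and intended argument.
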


 For a  self-enclosed subgroup $H$  of ${\bf U}$,  set  $H_{\gamma}=H \cap {\bf U}_{\gamma}$ as before for each $\gamma \in \Phi^+$. Let $\Phi^+=  \{\delta_1, \delta_2, \dots, \delta_m\}$.  We have
 $$\underline{H}= \underline{H_{\delta_1}}\  \underline{H_{\delta_2}}\ \dots \ \underline{H_{\delta_m}}.$$
Let $H_w= H\cap {\bf U}_w$.  Then we have
$\displaystyle H_w= \prod_{\gamma\in \Phi^{-}_w} H_{\gamma}$ and $\displaystyle \underline{H_w}= \prod_{\gamma\in \Phi^{-}_w} \underline{H_{\gamma}}.$
The following two lemmas are very crucial in the later proof of Theorem \ref{mainthm}.

\begin{lemma} \label{oneterm}
 Assume that $\text{char} \ \mathbb{F}=  \text{char} \ \Bbbk=p>0$ and  let $M$ be a nonzero $\mathbb{F}{\bf G}$-submodule of $E_J$. Then there exist an element $w\in Y_J$  and a  finite $p$-subgroup $X$ of ${\bf U}_{w_Jw^{-1}}$ such that
$\underline{X} wC_J \in M$.

\end{lemma}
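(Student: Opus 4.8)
The plan is to start from a nonzero $\xi \in M$, apply the non-vanishing property of the augmentation map (Proposition \ref{Key}) to replace $\xi$ by a group element acting on it so that $\epsilon(\xi) \neq 0$, and then use the positive-characteristic machinery — finitely generated subgroups of ${\bf U}$ are finite $p$-groups (Lemma \ref{SCgroup}) — to ``average'' $\xi$ over a large self-enclosed subgroup and collapse it to a single monomial term. Concretely, write
$$\xi = \sum_{w\in Y_J}\sum_{x\in {\bf U}_{w_Jw^{-1}}} a_{w,x}\, x w C_J,$$
a finite sum, and let $X_0$ be the (finite) set of all the $x$'s occurring with $a_{w,x}\neq 0$, together with all the coset-change elements one meets when multiplying by the $\dot w$'s. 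By Lemma \ref{SCgroup} there is a finite self-enclosed $p$-subgroup $H$ of ${\bf U}$ containing $X_0$. Since $\op{char}\mathbb{F}=p$, the element $\underline{H}\in\mathbb{F}{\bf U}$ is available, and $\underline{H}\,\xi \in M$ because $M$ is a submodule. The first key step is to compute $\underline{H}\,\xi$ componentwise: for each $w$, $\underline{H}$ acts on $\mathbb{F}{\bf U}_{w_Jw^{-1}}wC_J$, and using $H = H_{w_Jw^{-1}}\cdot H'_{w_Jw^{-1}}$ (the self-enclosed factorization relative to $w_Jw^{-1}$) together with $\dot w^{-1} H'_{w} \dot w \subset {\bf U}$, the part $H'_{w_Jw^{-1}}$ acts trivially after being pushed through $w C_J$ (it lands in ${\bf U}$ stabilizing $C_J$ — more precisely one uses Proposition \ref{DesEJ} and Lemma \ref{suhlemma} to see the contribution), so that $\underline{H}\, (x w C_J)$ is a scalar multiple of $\underline{H_{w_Jw^{-1}}}\, wC_J$ up to a translate; the translate can be absorbed because $x \in H$ and $H_{w_Jw^{-1}}$ is a subgroup, so $\underline{H_{w_Jw^{-1}}}\, x = \underline{H_{w_Jw^{-1}}}$ whenever $x\in H_{w_Jw^{-1}}$, and $=0$ otherwise by a coset count. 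This is where Lemma \ref{easylemma} enters: for the $x$ whose ${\bf U}_{w_Jw^{-1}}$-part is not in $H_{w_Jw^{-1}}$ one gets either $0$ or the full $\underline{(\text{bigger group})}$, and a careful bookkeeping of the augmentation sums $\sum_x a_{w,x}$ shows the total coefficient of $\underline{H_{w_Jw^{-1}}}\, wC_J$ equals (up to sign) $\epsilon_w\mathfrak P_w(\xi)$.

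The second step is to isolate a single $w$. After the averaging, $\underline{H}\,\xi = \sum_{w\in Y_J} c_w\, \underline{H_{w_Jw^{-1}}}\, wC_J$ with $c_w \in \mathbb{F}$ and at least one $c_w\neq 0$ (since $\epsilon(\xi)\neq 0$). Choose $w$ of \emph{minimal length} among those with $c_w\neq 0$. Now hit $\underline{H}\,\xi$ with a suitable unipotent element $u\in{\bf U}$ chosen so that conjugating/translating moves the other (longer, or incomparable) terms out of the $wC_J$ component while preserving the $w$-term — this is the same kind of argument as in Lemma \ref{firstlemma}, using that ${\bf U}$ is infinite to pick $u$ with all relevant ${\bf U}_s$-components nontrivial, combined with Lemma \ref{suhlemma}(i)–(iii) to control which terms survive in $\mathfrak P_w$. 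Applying $\mathfrak P_w$ then yields $c_w\, \underline{X}\, wC_J \in M$ for the relevant finite $p$-subgroup $X := u^{-1}H_{w_Jw^{-1}} u \cap {\bf U}_{w_Jw^{-1}}$ (or simply $X = H_{w_Jw^{-1}}$ if one arranges $u$ not to disturb that factor); rescaling by $c_w^{-1}$ gives $\underline{X}\, wC_J \in M$ as desired, with $X$ a finite $p$-subgroup of ${\bf U}_{w_Jw^{-1}}$.

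I expect the main obstacle to be the bookkeeping in the first step: verifying precisely how $\underline{H}$ acts on each basis vector $x w C_J$ of $E_J$, in particular that the ``wrong-side'' factor $H'_{w_Jw^{-1}}$ and the elements of $H$ outside ${\bf U}_{w_Jw^{-1}}$ really do contribute only scalars (via Lemma \ref{easylemma}) and not genuinely new basis vectors, and that the surviving scalar is exactly the augmentation $\epsilon_w\mathfrak P_w(\xi)$ rather than something that could vanish accidentally. The role of \emph{self-enclosed} is exactly to make the factorization $H = \prod_\gamma H_\gamma$ compatible with the various orderings of $\Phi^+$ forced on us by the commutator relations when we push $\underline{H}$ past $\dot w$, so that no cross terms of higher height sneak in; getting this compatibility stated cleanly is the technical heart. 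The second step (minimal-length selection plus an infinite-${\bf U}$ translation argument) is routine given Lemma \ref{firstlemma} and Lemma \ref{suhlemma}.
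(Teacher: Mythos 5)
Your first step has a genuine error. You propose to multiply $\xi$ by the group-sum $\underline{H}\in\mathbb{F}{\bf U}$ and compute componentwise. But for each $w$, writing $H = H_{w_Jw^{-1}}H'_{w_Jw^{-1}}$ (self-enclosed factorization) and using that $H'_{w_Jw^{-1}}$ fixes $\dot{w}C_J$, you get
$$\underline{H}\,x\dot{w}C_J \;=\; \underline{H}\,\dot{w}C_J \;=\; |H'_{w_Jw^{-1}}|\cdot \underline{H_{w_Jw^{-1}}}\,\dot{w}C_J.$$
Since $H'_{w_Jw^{-1}}$ is a $p$-group, the scalar $|H'_{w_Jw^{-1}}|$ is a positive power of $p$ unless $H\subseteq{\bf U}_{w_Jw^{-1}}$. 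Because $H$ was chosen to contain elements of $\,{\bf U}_{w_Jw'^{-1}}$ for several different $w'$ simultaneously, the factor $H'_{w_Jw^{-1}}$ is nontrivial in general, so the scalar vanishes in characteristic $p$. Thus $\underline{H}\xi = 0$, not a nonzero collapsed element, and the coefficient of $\underline{H_{w_Jw^{-1}}}\dot{w}C_J$ is $0$, not $\pm\epsilon_w\mathfrak{P}_w(\xi)$. The ``accidental vanishing'' you flag as a possible obstacle is, in fact, forced.

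The paper avoids exactly this problem by \emph{taking $V$-fixed points} rather than applying the norm element $\underline{V}$: it observes $\Bbbk V\xi\subset\bigoplus_w \Bbbk V_{w_Jw^{-1}}\dot{w}C_J$ as $\Bbbk V$-modules, invokes Serre's Proposition 26 that a finite $p$-group acting on a nonzero module over a field of characteristic $p$ has nonzero fixed vectors, and notes that the fixed subspace of each summand is spanned by $\underline{V_{w_Jw^{-1}}}\dot{w}C_J$. This produces a nonzero $\eta=\sum_w a_w\underline{V_{w_Jw^{-1}}}\dot{w}C_J\in M$ without the norm-map collapse. (On a module where the norm map has trivial image, fixed points can still be nonzero.) Your second step is also different and less secure: the paper does not isolate a minimal-length $w$ by a translation argument (that is the mechanism of Lemma \ref{firstlemma}), but instead runs an induction on $|A(\eta)|$ driven by a carefully chosen root $\gamma_s$ of maximal height lying outside $\bigcap_{w\in A(\eta)}\Phi^-_{w_Jw^{-1}}$, using $\underline{\Omega_s}$ (coset representatives) to annihilate the terms with $\gamma_s\notin\Phi^-_{w_Jw^{-1}}$ and to enlarge the subgroup on the surviving terms. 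Finally, Proposition \ref{Key} plays no role in this lemma; it is used elsewhere (in the characteristic-zero and cross-characteristic cases), so invoking it here is unnecessary.
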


\begin{proof} Let $\xi$ be a nonzero element of $M$ which has the form
$$\xi= \sum_{w\in Y_J} \sum_{x\in {\bf U}_{w_Jw^{-1}}}a_{w,x} x w C_J \in  E_J.$$
 By Lemma  \ref{SCgroup}, there exists a self-enclosed finite $p$-subgroup $V$ of  ${\bf U}$, which contains all $x\in {\bf U}_{w_Jw^{-1}}$ with  $a_{w,x}\ne 0$. Then we have
$$ \mathbb{F} V \xi\subset  \bigoplus_{w\in Y_J} \mathbb{F}  V_{w_Jw^{-1}} w C_J $$
as $\mathbb{F}  V$-modules. Since $(\mathbb{F} V \xi)^{V}\ne 0$ by \cite[Proposition 26]{Se} and noting that
$$(\bigoplus_{w\in Y_J} \mathbb{F} V_{w_Jw^{-1}} w C_J)^{V}\subset  \bigoplus_{w\in Y_J}\mathbb{F}
\underline{V_{w_Jw^{-1}}} w C_J,$$
there exists a nonzero element
$$\eta= \sum_{w\in Y_J} a_w \underline{V_{w_Jw^{-1}}} w C_J\in  \mathbb{F}  V \xi \subset  M.$$
Set $A(\eta)=\{w\in Y_J\mid a_w\ne 0\}$. If $|A(\eta)|=1$, the lemma is proved.

Now we assume that $|A(\eta)|\geq 2$.  We set  $\Phi(\eta)=\displaystyle \bigcup_{w\in A(\eta) } \Phi_{w_Jw^{-1}}^- $.
Let $\Phi(\eta)=\{\gamma_1, \gamma_2, \dots, \gamma_d\}$ such that $\text{ht}(\gamma_1)\leq \text{ht}(\gamma_2) \leq \dots \leq \text{ht}(\gamma_d)$.  Let $s$ be the maximal integer such that
$\gamma_s\notin \displaystyle \bigcap_{w\in A(\eta) } \Phi_{w_Jw^{-1}}^- $.
Let  $y\in {\bf U}_{\gamma_s}\backslash V_{\gamma_s}$ and  $H$ be a self-enclosed finite $p$-subgroup of ${\bf U}_{\gamma_s}{\bf U}_{\gamma_{s+1}}\dots {\bf U}_{\gamma_d}$ such that $H$ contains $V_{\gamma_s}V_{\gamma_{s+1}}\dots V_{\gamma_d}$ and $y$.
Let $X$ be the  self-enclosed  subgroup of $\bf U$ which is generated by $H$ and $V$.
Then it is easy to see that  $X$ has the following form
$$X=V_{\gamma_1} \dots V_{\gamma_{s-1}} X_{\gamma_s} \dots X_{\gamma_d},$$
where $X_{\gamma_k}= X\cap {\bf U}_{\gamma_k}$ for $s\leq k\leq d$.
Denote by  $\Omega_s$  a set of the left coset representatives of $V_{\gamma_s}V_{\gamma_{s+1}}\dots V_{\gamma_d}$  in $X_{\gamma_s} X_{\gamma_{s+1}} \dots X_{\gamma_d}$.
For the $w\in Y_J$ such that $\gamma_s \in  \Phi_{w_Jw^{-1}}^- $, we have
$$\underline{\Omega_s} \ \underline{V_{w_Jw^{-1}}}  w C_J=  \underline{X_{w_Jw^{-1}}} w C_J.$$
For the $w\in Y_J$ such that $\gamma_s \notin  \Phi_{w_Jw^{-1}}^- $, we have
$$\underline{\Omega_s} \ \underline{V_{w_Jw^{-1}}}  w C_J= 0$$
since $\text{char}\ \mathbb{F} =p$. Then we get
$$\eta'= \underline{\Omega_s} \ \eta= \sum_{w\in Y_J} b_w \underline{X_{w_Jw^{-1}}} w C_J,$$
which satisfies that $|A(\eta')| < |A(\eta)|$, where $A(\eta')=\{w\in Y_J\mid b_w\ne 0\} $. Thus by the induction on the cardinality of  $A(\eta)$, the lemma is proved.

\end{proof}

\begin{lemma} \label{leastterm}
 Assume that $\text{char} \ \mathbb{F}=  \text{char} \ \Bbbk=p>0$ and   let $M$ be a nonzero $\mathbb{F} {\bf G}$-submodule of $E_J$. If there exists a finite $p$-subgroup $X$ of ${\bf U}_{w_Jw^{-1}s}$ such that $\underline{X} swC_J \in M$, where $sw\in Y_J$ and $sw>w$ (which implies that $w\in Y_J$), then there exists a finite $p$-subgroup $H$ of  ${\bf U}_{w_Jw^{-1}}$ such that $\underline{H} wC_J \in M$.

 \end{lemma}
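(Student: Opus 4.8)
The plan is to apply $\dot s$ to the given element $\underline X\,swC_J$ and read off its component in the direct summand $\mathbb F\,{\bf U}_{w_Jw^{-1}}\,wC_J$ of $E_J$ (the basis being that of Proposition~\ref{DesEJ}).

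\smallskip\noindent\emph{Normalisation.} First I would note that enlarging $X$ is harmless: if $X\le X'\le{\bf U}_{w_Jw^{-1}s}$ and $\{\dot r_k\}$ represents the left cosets of $X$ in $X'$, then $\underline{X'}=\bigl(\sum_k\dot r_k\bigr)\underline X$, so $\underline{X'}\,swC_J=\bigl(\sum_k\dot r_k\bigr)\cdot\underline X\,swC_J\in M$. Hence, using Lemma~\ref{SCgroup} together with the fact that ${\bf U}_{w_Jw^{-1}s}$ is itself self-enclosed, I may assume $X$ is a self-enclosed finite $p$-subgroup of ${\bf U}_{w_Jw^{-1}s}$. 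Write $s=s_i$, $\alpha=\alpha_i$. Since $w,sw\in X_J$ and $sw>w$, one gets $\ell(w_Jw^{-1}s)=\ell(w_Jw^{-1})+1$, so $\Phi^-_{w_Jw^{-1}s}=\{\alpha\}\sqcup s(\Phi^-_{w_Jw^{-1}})$ and, as a set, ${\bf U}_{w_Jw^{-1}s}={\bf U}_\alpha\cdot\dot s\,{\bf U}_{w_Jw^{-1}}\,\dot s^{-1}$. Putting $X_\alpha=X\cap{\bf U}_\alpha$ and $X^0=X\cap\dot s\,{\bf U}_{w_Jw^{-1}}\,\dot s^{-1}$, self-enclosedness gives $X=X_\alpha X^0$ and $\underline X=\underline{X_\alpha}\,\underline{X^0}$; after one further enlargement I take $X_\alpha=\varepsilon_\alpha(\Lambda)$ for a finite $\mathbb F_p$-subspace $\Lambda$ of $\Bbbk$.

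\smallskip\noindent\emph{Applying $\dot s$.} As $s(sw)=w<sw$ we have $\dot s\cdot swC_J=wC_J$, whence
\[
\dot s\,\underline{X^0}\,swC_J=\underline{\dot sX^0\dot s^{-1}}\;wC_J=:\underline{H_0}\,wC_J ,
\]
where $H_0:=\dot sX^0\dot s^{-1}$ is a finite $p$-subgroup of ${\bf U}_{w_Jw^{-1}}$. Pushing $\dot s$ through $\underline{X_\alpha}$ and using $\dot s\,{\bf U}_\alpha\,\dot s^{-1}={\bf U}_{-\alpha}$ then gives
\[
\dot s\,\underline X\,swC_J=\underline{W_0}\;\underline{H_0}\;wC_J\in M ,\qquad W_0:=\dot sX_\alpha\dot s^{-1}\le{\bf U}_{-\alpha}.
\]
The term coming from $\mathrm{id}\in W_0$ is $\underline{H_0}\,wC_J\in\mathbb F\,{\bf U}_{w_Jw^{-1}}wC_J$; using Lemma~\ref{suhlemma}(ii) (applicable since $s(sw)\le sw$), the commutator relations, and the bi-closedness of the root set $\Phi^-_{w_Jw^{-1}s}$, one checks that each remaining term $z\,\underline{H_0}\,wC_J$ with $\mathrm{id}\ne z\in W_0$, once rewritten in the basis of Proposition~\ref{DesEJ}, lies in the single summand $\mathbb F\,{\bf U}_{w_Jw^{-1}s}\,swC_J$. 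Running the same computation with $\underline{X_\alpha}$ replaced by $\underline{\varepsilon_\alpha(\Lambda')}$ for an arbitrary finite $\mathbb F_p$-subspace $\Lambda'\supseteq\Lambda$ (keeping $X^0$) yields, more generally,
\[
\underline W\;\underline{H_0}\;wC_J\in M\qquad\text{for every finite $p$-subgroup }W\text{ of }{\bf U}_{-\alpha}\text{ containing }W_0 .
\]

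\smallskip\noindent\emph{Isolating $\underline{H_0}\,wC_J$ — the main obstacle.} It remains to deduce $\underline{H_0}\,wC_J\in M$ from this family, after which $H:=H_0$ settles the lemma. This is the delicate step, and the heart of the proof. It cannot be done with the relations above alone — all of the elements $\underline W\,\underline{H_0}\,wC_J$ share the same ``$W_0$-part'', so no $\mathbb F$-linear combination of them equals $\underline{H_0}\,wC_J$ — so one must bring in the characteristic-$p$ cancellations of Lemma~\ref{easylemma}. The plan here is to pass to a sufficiently large self-enclosed finite $p$-subgroup $V$ of ${\bf U}$ containing $H_0$ and enough of ${\bf U}_{-\alpha}$, so that all elements in sight lie in $\mathbb F V\cdot wC_J$, and then, inside $\mathbb F V$, to choose $W$ of the form $W_0\oplus W_1$ with $W_0\cap W_1=\{\mathrm{id}\}$ and $|W_1|=|W_0|$ — possible because ${\bf U}_{-\alpha}\cong(\Bbbk,+)$ has infinite $\mathbb F_p$-dimension — and to use Lemma~\ref{easylemma} together with $\text{char}\,\mathbb F=\text{char}\,\Bbbk=p$ to strip off the unwanted factor $\underline{W_0}$. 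This is the equal-characteristic mechanism of \cite{CD1}, with self-enclosed $p$-subgroups playing the role of the $\mathbb F_{q^a}$-points used there; carrying it through rigorously is the main difficulty, the earlier steps being essentially formal consequences of Lemma~\ref{suhlemma} and the commutator relations.
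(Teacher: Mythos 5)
Your normalization and the application of $\dot{s}$ do track the paper's opening moves: self-enclosedness via Lemma~\ref{SCgroup}, the factorization of $X$ as (something in ${\bf U}_\alpha$) times (something in $({\bf U}_{w_Jw^{-1}})^s$), and the observation that the non-identity contributions of $\dot s X_\alpha \dot s^{-1}$ land in the $swC_J$ summand. But the decisive step --- extracting $\underline{H_0}\,wC_J$ from $\underline{W_0}\,\underline{H_0}\,wC_J$ --- is not merely left as a sketch: the plan you outline for it would not go through. There is no self-enclosed finite $p$-subgroup of ${\bf U}$ ``containing $H_0$ and enough of ${\bf U}_{-\alpha}$,'' since ${\bf U}_{-\alpha}\cap{\bf U}=\{\mathrm{id}\}$ and the Section~3 machinery lives entirely inside ${\bf U}$. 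This is precisely why the paper never manipulates $W_0\le{\bf U}_{-\alpha}$ directly; instead it expands $\dot{s}u\dot{s}=f_\alpha(u)h_\alpha(u)\dot{s}g_\alpha(u)$, so that the unwanted contributions appear concretely as $f_\alpha(u)\,\underline{h_\alpha(u)Vh_\alpha(u)^{-1}}\,swC_J$ with $f_\alpha(u)\in{\bf U}_\alpha$, i.e.\ as basis terms of $E_J$ with visible positive-unipotent coefficients on which the self-enclosed/char-$p$ tools can act. Without this Bruhat factorization you have no handle on the $swC_J$-component, and Lemma~\ref{easylemma} has nothing to bite on.

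Second, even if that were patched, ``stripping off $\underline{W_0}$'' is not an operation Lemma~\ref{easylemma} provides: that lemma says $\underline{H'}\,\underline{K}$ is $0$ or $\underline{G}$, i.e.\ it annihilates or saturates a factor by \emph{left multiplication}, it never deletes one that is already present. What the paper actually does to eliminate the bad terms is a descent: order $\Phi_{w_Jw^{-1}}\cup\Phi_{w_Jw^{-1}s}$ by height, locate the largest root $\beta_r$ outside the intersection, and multiply by $\underline{\Omega_r}\cdots\underline{\Omega_m}$ (subgroups of the individual ${\bf U}_{\beta_k}$, built via Lemma~\ref{SCgroup}, Lemma~\ref{easylemma} and \cite[Lemma~4.5]{CD2}); this either kills all the $swC_J$-terms at once (when $\beta_r\in\Phi_{w_Jw^{-1}}\setminus\Phi_{w_Jw^{-1}s}$) or strictly shrinks the ``$Y$''-support and allows an induction on $|Y|$ (when $\beta_r\in\Phi_{w_Jw^{-1}s}\setminus\Phi_{w_Jw^{-1}}$). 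This pivot-root case split and induction is the real engine of the proof, and it is missing entirely from your outline; to repair the argument you would need both the Bruhat re-expansion of the $W_0$-terms and this $\beta_r$-descent.
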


\begin{proof} Using Lemma \ref{SCgroup}, we can assume that $X$ is a self-enclosed subgroup of ${\bf U}_{w_Jw^{-1}s}$. Since ${\bf U}_{w_Jw^{-1}s}= {\bf U}_{s}  ({\bf U}_{w_Jw^{-1}})^s $, we can write
$X= X_{\alpha} V $, where $V= X\cap ({\bf U}_{w_Jw^{-1}})^s$ is also a self-enclosed subgroup of $({\bf U}_{w_Jw^{-1}})^s$. Thus we have $\underline{X}= \underline{X_{\alpha}}\ \underline{V}$.
In the following, we will prove that if $\underline{Y}\ \underline{V} \ swC_J \in M$ for some finite subset $Y$ of $ {\bf U}_{s}$ and a  self-enclosed subgroup $V$ of $({\bf U}_{w_Jw^{-1}})^s$, then  there exists a finite $p$-subgroup $H$ of  ${\bf U}_{w_Jw^{-1}}$ such that $\underline{H} wC_J \in M$. Without loss of generality, we can assume that  $Y$ contains the neutral element of ${\bf U}_s$.

For each $u\in {\bf U}_{\alpha}\backslash\{id\}$, we have
$$\dot{s}u \dot{s}= f_\alpha(u)h_{\alpha}(u) \dot{s} g_\alpha(u),$$
where $f_\alpha(u), g_\alpha(u) \in {\bf U}_{\alpha}$ and $ h_{\alpha}(u)\in {\bf T}$ are uniquely determined.
Then
$$\dot{s}u \underline{V} swC_J= f_\alpha(u) h_{\alpha}(u)\dot{s}  g_\alpha(u) \dot{s}^{-1}\underline{V} swC_J.$$
Without loss of generality, we can assume that the group $V$ contains enough elements such that
$$g_\alpha(u) \dot{s}^{-1}\underline{V} swC_J=  \dot{s}^{-1}\underline{V} swC_J$$
for any $u\in Y\backslash\{id\}$.  Indeed, we let
$$G_{\alpha}(X)=\{g_\alpha(u) \in {\bf U}_{\alpha}\mid u\in Y \backslash\{id\}\}$$
and  $H$ be a self-enclosed subgroup which contains $G_{\alpha}(X)$ and $\dot{s}^{-1}V  \dot{s}$.
Then $H_{w_Jw^{-1}}= H \cap {\bf U}_{w_Jw^{-1}}$ is also a self-enclosed subgroup which contains $\dot{s}^{-1}V  \dot{s}$.
Then we can consider $ \underline{Y}\ \underline{\dot{s}H_{w_Jw^{-1}}\dot{s}^{-1}}$ instead of $\underline{Y}\ \underline{V}$ from the beginning.
Noting that $h_{\alpha}(u) \in {\bf T}$,  we have
$$\dot{s}u \underline{V} swC_J= f_\alpha(u) h_{\alpha}(u)\underline{V} swC_J= f_\alpha(u)\underline{ {h_{\alpha}(u)Vh_{\alpha}(u)}^{-1}} swC_J,$$
which implies that
$$\dot{s}\underline{Y}\ \underline{V}  swC_J = \underline{\dot{s}V \dot{s}^{-1}} wC_J+ \sum_{u\in  Y\backslash\{id\} }f_\alpha(u)\underline{ {h_{\alpha}(u)Vh_{\alpha}(u)}^{-1}} swC_J.$$

Now we let
$$\Phi^-_{w_Jw^{-1}} \cup \Phi^-_{w_Jw^{-1}s}=\{\beta_1=\alpha, \beta_2,\dots, \beta_m\}$$
such that  $\text{ht}(\beta_1) \leq \text{ht}(\beta_2)\leq \dots \leq \text{ht}(\beta_m)$. Since $sw\in Y_J$ and $sw>w$, we have $ ({\bf U}_{w_Jw^{-1}})^s \ne  {\bf U}_{w_Jw^{-1}}$ by \cite[Corollary 2.2]{CD2}. Now let $r$ be the maximal integer such that  $\beta_r\notin \Phi^-_{w_Jw^{-1}} \cap \Phi^-_{w_Jw^{-1}s} $ and $\beta_j\in \Phi^-_{w_Jw^{-1}} \cap \Phi^-_{w_Jw^{-1}s} $ for $j>r$.
When $\beta_r \in \Phi^-_{w_Jw^{-1}}  \backslash \Phi^-_{w_Jw^{-1}s}$, using Lemma \ref{SCgroup}, Lemma \ref{easylemma} and \cite[Lemma 4.5]{CD2}, we can choose certain subgroup  $\Omega_k$ of ${\bf U}_{\beta_k}$ for each $r\leq k \leq m$ such that
$$\underline{\Omega_r}\ \underline{\Omega_{r+1}} \dots \underline{\Omega_m} \ f_\alpha(u)\underline{ {h_{\alpha}(u)Vh_{\alpha}(u)}^{-1}} swC_J=0$$
for any $u\in  Y\backslash\{id\}$  and $$\underline{\Omega_r}\ \underline{\Omega_{r+1}} \dots \underline{\Omega_m} \ \underline{\dot{s}V \dot{s}^{-1}} wC_J= \underline{\Omega}  wC_J$$ for some finite subgroup $\Omega$ of ${\bf U}_{w_Jw^{-1}}$.  Then the lemma is proved in this case.

When $\beta_r \in \Phi^-_{w_Jw^{-1}s} \backslash \Phi^-_{w_Jw^{-1}}$, also by Lemma \ref{SCgroup},  Lemma \ref{easylemma} and \cite[Lemma 4.5]{CD2}, we can choose certain subgroup  $\Gamma_k$ of ${\bf U}_{\beta_k}$ for each $r\leq k \leq m$ such that there exists at least one $u\in Y \backslash\{id\}$ which  satisfies
$$\underline{\Gamma_r}\ \underline{\Gamma_{r+1}} \dots \underline{\Gamma_m} \ f_\alpha(u)\underline{ {h_{\alpha}(u)Vh_{\alpha}(u)}^{-1}} swC_J= f_\alpha(u) \underline{\Gamma}swC_J, $$
where $\Gamma$ is some finite subgroup of $({\bf U}_{w_Jw^{-1}})^s $. On the other hand, these groups $\Gamma_r, \Gamma_{r+1}, \dots, \Gamma_m$   also make
$$\underline{\Gamma_r}\ \underline{\Gamma_{r+1}} \dots \underline{\Gamma_m}\ \underline{\dot{s}V \dot{s}^{-1}} wC_J= 0.$$
Therefore we get  $\displaystyle \sum_{x\in F}x \underline{\Gamma} swC_J\in M$ for some set $F$ with $|F|< |Y|$ and some finite subgroup $\Gamma$ of $({\bf U}_{w_Jw^{-1}})^s $. Hence by the same discussion as before, we  get another element $\displaystyle \sum_{y\in F'} y \underline{\Gamma'} swC_J\in M$ for some set $F'$ with $|F'|< |F|$ and some finite subgroup $\Gamma'$ of $({\bf U}_{w_Jw^{-1}})^s $. Finally, we get an element $\underline{K} swC_J\in M$
 for some finite subgroup $K$ of $({\bf U}_{w_Jw^{-1}})^s $. Thus we have $\underline{K^{\dot{s}}}  wC_J\in M$ and the lemma is proved.

\end{proof}

Finally we prove the irreducibility of $E_J$ in  the case $\text{char} \ \mathbb{F}=  \text{char} \ \Bbbk=p>0$ using the previous lemmas. Let $M$ be a nonzero $\mathbb{F} {\bf G}$-submodule of $E_J$. Combining Lemma \ref{oneterm} and Lemma \ref{leastterm},  there exists a  finite $p$-subgroup $H$ of ${\bf U}_{w_J}$ such that $\underline{H}C_J\in M$. Similar to the arguments of \cite[Lemma 2.5]{Yang}, we see that the sum of all coefficients of  $\dot{w_J}x C_J$ in terms the basis
$\{uC_J\mid u\in {\bf U}_{w_J}\}$ is zero when $x$ is not the  neutral element of ${\bf U}_{w_J}$. So if we write
$$\xi= w_J \underline{H}C_J= \displaystyle  \sum_{x\in {\bf U}_{w_J} } a_x xC_J,$$
we have $\displaystyle \sum_{x\in {\bf U}_{w_J} } a_x= (-1)^{\ell(w_J)}$ which is nonzero. We consider the $\mathbb{F} {\bf U}_{w_J}$-module generated by $\xi$, and then using \cite[Proposition 4.1]{PS}, we see  that $C_J\in M$. Therefore $M=E_J$, which implies the  irreducibility  of $E_J$ for any $J\subset I$. All the $\mathbb{F}{\bf G}$-modules  $E_J$  are  pairwise non-isomorphic by Proposition \ref{EJ} and thus Theorem \ref{mainthm} is proved.

\bigskip

\noindent{\bf Acknowledgements.} The author is grateful to  Nanhua Xi and  Xiaoyu Chen for their suggestions and helpful discussions.
The author also thanks the referee for  the helpful comments which  greatly improve the manuscript.  The work is sponsored by Shanghai Sailing Program (No.21YF1429000) and NSFC-12101405.

\bigskip

\noindent{\bf Statements and Declarations }\ \ The author declares that he has no conflict of interests with others.

\bigskip

\bibliographystyle{amsplain}

\begin{thebibliography}{10}


\bibitem {Car}
R. W. Carter, \textit{Finite groups of Lie type: conjugacy classes and complex characters},  Pure Appl. Math. John Wiley and Sons, New York, 1985.

\bibitem {CD1}
Xiaoyu Chen, Junbin Dong, \textit{The permutation module on flag varieties in cross characteristic}, Math. Z. 293 (2019): 475-484.


\bibitem {CD2}
Xiaoyu Chen, Junbin Dong, \textit{The decomposition of permutation module for infinite Chevalley groups}, Sci. China Math. 64 (2021), no. 5, 921-930.


\bibitem{G} M. Geck,   \textit{Kazhdan-Lusztig cells and the Murphy basis}, Proceedings of the London Mathematical Society, 2006, 93(3): 635-665.



\bibitem{KL}
D. Kazhdan, G. Lusztig, \textit{Representations of Coxeter groups and Hecke algebras}, Invent. Math. 53 (1979), no. 2, 165-184.




\bibitem {PS}
A.Putman, A. Snowden, \textit{The Steinberg representation is irreducible}, Duke Math. J. 172 (2023), no. 4, 775-808.



\bibitem{Se}
J.P. Serre, \textit{Linear representations of finite groups}, GTM 42. New York-Heidelberg, Springer-Verlag, 1977.


\bibitem {Xi}
Nanhua Xi, \textit{Some infinite dimensional representations of reductive groups with Frobenius maps}, Sci. China Math. 57 (2014), 1109-1120.


\bibitem {Yang}
Ruotao Yang, \textit{Irreducibility of infinite dimensional Steinberg modules of reductive groups with Frobenius maps}, J. Algebra 533 (2019), 17-24.





\end{thebibliography}

\end{document}